\newcommand{\trV}{\leb2(\Gamma^-, |\vec \omega \cdot \vec{n}|)}
\newcommand{\vertiii}[1]{{\left\vert\kern-0.25ex\left\vert\kern-0.25ex\left\vert #1 
    \right\vert\kern-0.25ex\right\vert\kern-0.25ex\right\vert}}
\title{Deep Uzawa for Kinetic Transport with Lagrange-Enforced Boundaries}
\author[3]{Charalambos Makridakis}
\author[1]{Aaron Pim}
\author[1,2]{Tristan Pryer}
\author[3]{Nikolaos Rekatsinas}
\address{$^1$ Institute for Mathematical Innovation\\ University of
  Bath, Bath, UK. $^2$ Department of Mathematical Sciences
  \\ University of Bath, Bath, UK. $^3$ Institute of Computational and
  Applied Mathematics, Foundation for Research and Technology (FORTH),
  GR 70013, Crete, Greece}
\begin{document}

\begin{abstract}
  We propose a neural network framework for solving stationary linear
  transport equations with inflow boundary conditions. The method
  represents the solution using a neural network and imposes the
  boundary condition via a Lagrange multiplier, based on a
  saddle-point formulation inspired by the classical Uzawa
  algorithm. The scheme is mesh-free, compatible with automatic
  differentiation and extends naturally to problems with scattering
  and heterogeneous media. We establish convergence of the continuum
  formulation and analyse the effects of quadrature error, neural
  approximation and inexact optimisation in the discrete
  implementation. Numerical experiments show that the method captures
  anisotropic transport, enforces boundary conditions and resolves
  scattering dynamics accurately.
\end{abstract}

\maketitle

\section{Introduction}

\label{sec:introduction}

Many physical systems, from astrophysics to plasma dynamics, require
the resolution of kinetic equations that describe the transport of
particles, energy or radiation through complex media. In particular,
problems in radiative transfer \cite{pomraning2005equations}, neutron
transport \cite{duderstadt1976nuclear}, medical physics
\cite{ashby2025efficient} and rarefied gas dynamics
\cite{bird1994molecular} all involve time-independent transport models
with inflow boundary conditions, where prescribed data enters the
domain along characteristic trajectories. These settings are
especially challenging due to the hyperbolic nature of the problem,
the high dimensionality of the phase space, the anisotropic nature of
the dynamics and the difficulty of enforcing boundary conditions in a
manner that is both stable and accurate, particularly in mesh-free or
data-driven contexts.

A wide array of classical methods has been developed to address these
challenges. Discrete ordinates and spherical harmonics expansions
\cite{sanchez1982review} provide deterministic angular discretisations
for the transport equation, while Monte Carlo approaches offer
stochastic alternatives \cite{cox2022monte}. In the context of
rarefied gas dynamics, foundational work has established the
mathematical structure of kinetic transport through the Boltzmann
equation \cite{cercignani1988boltzmann} and effective particle-based
simulation tools such as direct simulation Monte Carlo
\cite{bird1994molecular} remain widely used for engineering-scale
problems. Deterministic methods, including finite difference, finite
volume, discontinuous Galerkin and spectral discretisations, are also
well developed for transport-type PDEs (see
e.g. \cite{cockburn2004runge,ern2004theory,calloo2025cycle,houston2024efficient})
and have proven effective across a range of settings. Complementing
these, variational approaches based on Lagrange multipliers and
saddle-point formulations offer rigorous frameworks for handling
constraints such as boundary conditions
\cite{babuska1973finite,hinze2008optimization} and continue to
influence the design of modern linear and nonlinear solvers
\cite{benzi2005numerical}.

In recent years, neural network methods have emerged as an alternative
computational paradigm for PDEs, promising better scalability in
high-dimensional problems by avoiding the explicit construction of
meshes \cite{han2018solving,mishra2021physics}. The physics-informed
neural network (PINN) framework \cite{raissi2019physics} incorporates
the PDE residual into a loss functional, allowing neural networks to
learn solutions from data and physical constraints. The flexibility of
this approach has enabled progress on a broad range of forward and
inverse problems, including settings that are otherwise ill-posed or
computationally intractable
\cite{lagaris2000neural,kotary2024learning}. Among these, the Deep
Ritz method \cite{yu2018deep} offers a variational formulation for
elliptic PDEs that connects naturally with classical energy principles
and has inspired several subsequent developments.

Despite their promise, PINNs face challenges when applied to transport
equations, particularly in the treatment of boundary conditions. Since
the loss functional typically enforces inflow data only via soft
penalty terms, a trade-off arises between minimising the PDE residual
and matching boundary values
\cite{krishnapriyan2021characterizing}. For hyperbolic equations where
the boundary strongly dictates the interior solution, this can lead to
bias and reduced accuracy. Several recent works aim to address this
limitation by modifying the weak enforcement strategy. The Deep
Nitsche method \cite{ming2021deep} adapts classical weak imposition of
boundary conditions to the neural setting. Adaptive PINNs
\cite{jagtap2020adaptive}, structured architectures \cite{li2020deep}
and domain decomposition methods offer alternative means of
incorporating boundary information and resolving local features. More
recently, augmented Lagrangian PINNs \cite{basir2023adaptive} and
constraint-satisfying variational methods \cite{zhang2025physics} have
advanced the integration of optimisation theory with neural solvers,
aiming for improved constraint fidelity.

This work introduces a new mesh-free method for stationary kinetic
transport that incorporates inflow boundary conditions via a Lagrange
multiplier. Inspired by the classical Uzawa algorithm
\cite{Uzawa1958}, we adopt a saddle-point iteration in which the
solution and multiplier are represented as neural networks and updated
alternately. This approach, which we term Deep Uzawa, ensures that the
PDE and boundary constraints are treated on equal theoretical
footing. The multiplier is defined as a functional on the inflow
boundary, enforcing the constraint in the trace norm, while the state
network is trained to minimise the Lagrangian, incorporating both the
transport operator and boundary interaction
\cite{makridakis2024deepbcs}. The resulting scheme is compatible with
automatic differentiation and stochastic quadrature, making it
suitable for irregular geometries and data-driven scenarios.

We establish convergence of the continuum Uzawa scheme and analyse the
discrete implementation using neural networks, quantifying the role of
approximation error, quadrature variance and the optimiser itself. Our
analysis shows that the boundary condition is asymptotically enforced
in a strong sense, improving over existing penalty-based PINNs, which
generally guarantee only soft satisfaction. Our formulation also
connects conceptually with constrained neural networks, differentiable
optimisation layers and augmented Lagrangian architectures
\cite{amos2017optnet,amos2017input,son2023enhanced,lu2021physics},
reinforcing the compatibility of classical convex analysis with modern
learning frameworks.

The rest of the paper is organised as
follows. Section~\ref{sec:notation} introduces the PDE model, function
spaces and boundary trace setting. Section~\ref{sec:uzawa} presents
the Uzawa iteration and proves convergence at the continuum
level. Section~\ref{sec:NN} describes the neural architecture and its
discrete implementation. Convergence analysis is given in
Section~\ref{sec:convergence}. Section~\ref{sec:numerics} provides
numerical experiments illustrating the method’s behaviour across a
range of examples. Finally, Section~\ref{sec:conc} offers concluding
remarks and directions for future work.

\section{Notation and Problem Setup}
\label{sec:notation}

We consider the stationary monoenergetic linear transport equation
posed on a spatial domain $D \subset \mathbb{R}^d$, with $d \in
\{2,3\}$. The angular variable $\vec \omega$ ranges over the unit
sphere $\mathbb{S}^{d-1}$, so the phase space is given by the product
set $\mathcal{W} := D \times \mathbb{S}^{d-1}$. The inflow and outflow
portions of the boundary are defined by
\begin{equation}
  \Gamma^\pm := \ensemble{ (\vec{x}, \vec \omega) \in \partial D \times \mathbb{S}^{d-1} }{ \pm \vec{n}(\vec{x}) \cdot \vec \omega > 0 },
\end{equation}
where $\vec{n}(\vec{x})$ denotes the outward unit normal to $\partial
D$ at the point $\vec{x}$.

We seek solutions $u : \mathcal{W} \to \mathbb{R}$ to the boundary
value problem
\begin{equation}
  \vec \omega \cdot \nabla_{\vec{x}} u(\vec{x}, \vec \omega)
  + \sigma_A u(\vec{x}, \vec \omega)
  =
  \sigma_T \left( \frac{1}{|\mathbb{S}^{d-1}|} \int_{\mathbb{S}^{d-1}} \pi(\vec \omega \cdot \vec \omega')\, u(\vec{x}, \vec \omega')\, \mathrm{d}\vec \omega'
  - u(\vec{x}, \vec \omega) \right)
  \quad \text{in } \mathcal{W},
\end{equation}
subject to the inflow boundary condition
\begin{equation}
  u(\vec{x}, \vec \omega) = g(\vec{x}, \vec \omega) \quad \text{for } (\vec{x}, \vec \omega) \in \Gamma^-,
\end{equation}
where $\sigma_A, \sigma_T \geq 0$ are the absorption and scattering
coefficients, and $\pi \in C^1([-1,1]; \mathbb{R}_{\geq 0})$ is a
normalised scattering kernel satisfying
\begin{equation}
  \int_{-1}^1 \pi(y)\, \mathrm{d}y = 1.
\end{equation}

For convenience, we define the transport and scattering operators
acting on $u : \mathcal{W} \to \mathbb{R}$ by
\begin{equation}
  \mathcal{T} u := \vec \omega \cdot \nabla_{\vec{x}} u + \sigma_A u, \qquad
  \mathcal{S} u := \sigma_T \left( u - \frac{1}{|\mathbb{S}^{d-1}|} \int_{\mathbb{S}^{d-1}} \pi(\vec \omega \cdot \vec \omega')\, u(\vec{x}, \vec \omega')\, \mathrm{d}\vec \omega' \right),
\end{equation}
so that the governing equation reads
\begin{equation}
  \mathcal{T} u + \mathcal{S} u = 0 \quad \text{in } \mathcal{W}, \qquad u = g \quad \text{on } \Gamma^-.
\end{equation}

We work in the space
\begin{equation}
  V := \ensemble{ u \in \leb2(\mathcal{W}) }{ \mathcal{T}u + \mathcal{S}u \in \leb2(\mathcal{W}),\ u|_{\Gamma^-} \in \leb2(\Gamma^-) },
\end{equation}
equipped with the graph norm
\begin{equation}
  \Norm{u}_V^2 := \Norm{ (\mathcal{T} + \mathcal{S}) u }_{\leb2(\mathcal{W})}^2 + \Norm{ u }_{\leb2(\Gamma^-)}^2.
\end{equation}
The boundary norm is defined with respect to the transport measure:
\begin{equation}
  \Norm{ u }_{\leb2(\Gamma^-)}^2 := \int_{\Gamma^-} |u(\vec{x}, \vec \omega)|^2\, |\vec{n}(\vec{x}) \cdot \vec \omega|\, \mathrm{d}\vec \omega\, \mathrm{d}\vec{x}.
\end{equation}

\begin{remark}[Continuity of the inflow trace]
The continuity of the trace map $V \to \leb2(\Gamma^-)$ follows from
integration along characteristics. Explicitly, for each characteristic
curve parameterised by arc-length $s$, the inflow trace continuity
follows from the boundedness of the integrals along characteristics
\begin{equation}
  |u(x,\omega)|^2 \leq \int_0^{\tau(x,\omega)} |(T + S)u(x - s\omega, \omega)|^2 \, ds, \quad (x,\omega) \in \Gamma^-,
\end{equation}
with $\tau(x,\omega)$ being the characteristic time-to-boundary. See,
for example, \cite{egger2012mixed} for a rigorous treatment of the
associated graph spaces.
\end{remark}

\begin{remark}[Spectral action of $\mathcal{S}$]
Let $P_n$ denote the Legendre polynomial of degree $n$. If $u(\vec{x},
\cdot)$ is expanded in spherical harmonics, then the scattering
operator $\mathcal{S}$ acts diagonally with eigenvalues
\begin{equation}
  \mu_n := \sigma_T \left( 1 - \int_{-1}^1 \pi(y)\, P_n(y)\, \mathrm{d}y \right).
\end{equation}
This spectral decomposition is useful in understanding the smoothing
effect of angular averaging when $\pi$ is not constant.
\end{remark}

\section{Lagrange Formulation and Uzawa Iteration}
\label{sec:uzawa}

The stationary transport problem posed in Section~\ref{sec:notation}
is boundary-driven and noncoercive. The governing operator
$\mathcal{T} + \mathcal{S}$ is non-self-adjoint and, although the
space $V$ controls the inflow trace in $L^2(\Gamma^-)$, minimising the
residual $\| (\mathcal{T} + \mathcal{S})u \|_{L^2(\mathcal{W})}$ alone
does not ensure that the boundary condition $u = g$ is
satisfied. Penalty-based methods provide one remedy but require tuning
of the weight parameter and do not guarantee strict enforcement.

To enforce the inflow condition consistently while retaining a weak
variational formulation, we introduce a Lagrange multiplier $\lambda
\in \leb2(\Gamma^-)$ and pose a constrained minimisation
problem. Given $g \in \leb2(\Gamma^-)$, we seek $u^* \in V$ satisfying
\begin{equation}
  \mathcal{T} u^* + \mathcal{S} u^* = 0 \quad \text{in } \mathcal{W}, \qquad u^* = g \quad \text{on } \Gamma^-.
\end{equation}
We define the residual-based cost functional
\begin{equation}
  J(u) := \frac{1}{2} \Norm{ \mathcal{T} u + \mathcal{S} u }^2_{\leb2(\mathcal{W})}
  + \frac{\gamma}{2} \Norm{ u - g }^2_{\leb2(\Gamma^-)},
\end{equation}
which penalises mismatch with the inflow condition. The corresponding
Lagrangian is
\begin{equation}
  L(u, \lambda) := J(u) - \ltwop{ \lambda }{ u - g }_{\leb2(\Gamma^-)},
\end{equation}
and the constrained problem becomes: find $(u^*, \lambda^*) \in V \times \leb2(\Gamma^-)$ such that
\begin{equation}
  \label{eq:minimisers}
  (u^*, \lambda^*) := \min_{u \in V} \max_{\lambda \in \leb2(\Gamma^-)} L(u, \lambda).
\end{equation}
This formulation is a saddle-point problem in the spirit of
PDE-constrained optimisation \cite{makridakis2024deep}, adapted to the
transport setting. It avoids strong penalty enforcement and
accommodates noncoercive operators, anisotropy and boundary inflow
dynamics without requiring compactness \cite{makridakis2024deepbcs}.

\begin{definition}[Uzawa iteration]
\label{def:uzawa}
Given an initial multiplier $\lambda^0 \in \leb2(\Gamma^-)$, step size
$\rho > 0$ and iteration index $k \geq 0$, the Uzawa scheme produces
sequences $\{u^k\} \subset V$ and $\{\lambda^k\} \subset
\leb2(\Gamma^-)$ through
\begin{equation}
  \begin{split}
    u^{k} &:= \arg\min_{u \in V}
    L(u, \lambda^k)
    \\
    \lambda^{k+1} &:= \lambda^k + \rho \qp{ u^k - g } \quad \text{in } \leb2(\Gamma^-).
  \end{split}
\end{equation}
\end{definition}

We now turn to the analysis of this scheme. Several auxiliary results
are needed before establishing convergence of the iterates.

\begin{lemma}[Residual identity]\label{lemma:1}
Let $u^k, \lambda^k$ be the iterates of the Uzawa scheme in Definition
\ref{def:uzawa}, and let $(u^*, \lambda^*)$ denote the saddle point
given by (\ref{eq:minimisers}). Then
\begin{equation}
  \Norm{ (\mathcal{T} + \mathcal{S})(u^k - u^*) }^2_{\leb2(\mathcal{W})}
  + \gamma \Norm{ u^k - u^* }^2_{\leb2(\Gamma^-)}
  = \ltwop{ \lambda^k - \lambda^* }{ u^k - u^* }_{\leb2(\Gamma^-)}.
\end{equation}
\end{lemma}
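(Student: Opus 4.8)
The plan is to characterise both $u^k$ and the state component $u^*$ of the saddle point by their first-order optimality conditions, subtract these, and then test the resulting variational identity against the specific direction $v = u^k - u^*$. The starting observation is that, for fixed $\lambda$, the map $u \mapsto L(u,\lambda)$ is a convex quadratic functional on $V$: its leading quadratic part is $\tfrac12\Norm{(\mathcal{T}+\mathcal{S})u}_{\leb2(\mathcal{W})}^2 + \tfrac{\gamma}{2}\Norm{u}_{\leb2(\Gamma^-)}^2$, which for $\gamma > 0$ is equivalent to $\tfrac12\Norm{u}_V^2$. Hence the minimiser exists, is unique, and is completely characterised by the vanishing of the G\^ateaux derivative.

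Next I would compute that derivative. Writing $\mathcal{A} := \mathcal{T} + \mathcal{S}$, the stationarity condition for $u^k = \arg\min_{u\in V} L(u,\lambda^k)$ reads
\[
  \ltwop{\mathcal{A}u^k}{\mathcal{A}v}_{\leb2(\mathcal{W})}
  + \gamma\,\ltwop{u^k - g}{v}_{\leb2(\Gamma^-)}
  - \ltwop{\lambda^k}{v}_{\leb2(\Gamma^-)} = 0
  \qquad \forall\, v \in V,
\]
each term being well defined since $\mathcal{A}u^k, \mathcal{A}v \in \leb2(\mathcal{W})$ and the traces lie in $\leb2(\Gamma^-)$ by the definition of $V$ and the trace continuity recorded above. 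The $u$-component of the saddle-point system (\ref{eq:minimisers}) is precisely the stationarity of $L(\cdot,\lambda^*)$, so $(u^*,\lambda^*)$ satisfies the identical relation with $u^k,\lambda^k$ replaced by $u^*,\lambda^*$.

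Subtracting the two relations, the inhomogeneous boundary term $-\gamma\ltwop{g}{v}_{\leb2(\Gamma^-)}$ is common to both and cancels, leaving
\[
  \ltwop{\mathcal{A}(u^k - u^*)}{\mathcal{A}v}_{\leb2(\mathcal{W})}
  + \gamma\,\ltwop{u^k - u^*}{v}_{\leb2(\Gamma^-)}
  = \ltwop{\lambda^k - \lambda^*}{v}_{\leb2(\Gamma^-)}
  \qquad \forall\, v \in V.
\]
Choosing the admissible test function $v = u^k - u^* \in V$ turns the left-hand side into the two squared norms and yields exactly the claimed identity.

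I expect no serious analytic obstacle here: the computation is that of a quadratic functional. The only points requiring care are (i) justifying that stationarity is both necessary and sufficient, which follows from convexity and the coercivity of the leading form in the graph norm, and (ii) observing that the $g$-dependent contributions appear identically in the conditions for $u^k$ and $u^*$ and therefore drop out, so the identity holds without separately invoking the constraint $u^* = g$. A secondary subtlety, already absorbed into the function-space setup, is the well-posedness of the boundary pairings, which relies on the continuity of the inflow trace map $V \to \leb2(\Gamma^-)$.
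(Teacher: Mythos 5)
Your proof is correct and follows essentially the same route as the paper: write down the first-order optimality (Euler--Lagrange) conditions for $u^k$ and $u^*$ with respect to $L(\cdot,\lambda^k)$ and $L(\cdot,\lambda^*)$, subtract, and test with $\phi = u^k - u^*$. If anything, your explicit retention and cancellation of the $\gamma\,\ltwop{g}{v}_{\leb2(\Gamma^-)}$ contribution is a touch more careful than the optimality conditions as displayed in the paper, which omit that term (harmlessly, since it cancels on subtraction exactly as you observe).
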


\begin{proof}
By the optimality of $u^*$ and $u^k$ with respect to the Lagrangians
$L(u, \lambda^*)$ and $L(u, \lambda^k)$, we have
\begin{equation}
  \ltwop{ (\mathcal{T} + \mathcal{S}) u^* }{ (\mathcal{T} + \mathcal{S}) \phi }_{\leb2(\mathcal{W})}
  + \gamma \ltwop{ u^* }{ \phi }_{\leb2(\Gamma^-)}
  = \ltwop{ \lambda^* }{ \phi }_{\leb2(\Gamma^-)},
\end{equation}
\begin{equation}
  \ltwop{ (\mathcal{T} + \mathcal{S}) u^k }{ (\mathcal{T} + \mathcal{S}) \phi }_{\leb2(\mathcal{W})}
  + \gamma \ltwop{ u^k }{ \phi }_{\leb2(\Gamma^-)}
  = \ltwop{ \lambda^k }{ \phi }_{\leb2(\Gamma^-)}
\end{equation}
for all $\phi \in V$. Subtracting the two identities and choosing $\phi = u^k - u^*$ yields
\begin{equation}
  \Norm{ (\mathcal{T} + \mathcal{S})(u^k - u^*) }^2_{\leb2(\mathcal{W})}
  + \gamma \Norm{ u^k - u^* }^2_{\leb2(\Gamma^-)}
  = \ltwop{ \lambda^k - \lambda^* }{ u^k - u^* }_{\leb2(\Gamma^-)}. \qedhere
\end{equation}
\end{proof}

\begin{lemma}[Multiplier distance recursion]\label{lemma:2}
Let $u^k, \lambda^k$ be the iterates of the Uzawa scheme given in
Definition \ref{def:uzawa} and let $(u^*, \lambda^*)$ denote the
saddle point given by (\ref{eq:minimisers}). Then
\begin{equation}
  \Norm{ \lambda^{k+1} - \lambda^* }^2_{\leb2(\Gamma^-)}
  =
  \Norm{ \lambda^k - \lambda^* }^2_{\leb2(\Gamma^-)}
  - 2\rho\, \ltwop{ \lambda^k - \lambda^* }{ u^k - u^* }_{\leb2(\Gamma^-)}
  + \rho^2\, \Norm{ u^k - u^* }^2_{\leb2(\Gamma^-)}.
\end{equation}
\end{lemma}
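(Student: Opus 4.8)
The plan is to treat Lemma \ref{lemma:2} as a purely algebraic identity about the multiplier update, obtained by expanding a squared norm in the Hilbert space $\leb2(\Gamma^-)$. Unlike Lemma \ref{lemma:1}, no variational optimality condition for $u^k$ is needed here; the only structural input is the characterisation of the saddle point.

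First I would record the fact that the saddle point enforces the inflow constraint exactly. Maximising $L(u,\lambda)$ over $\lambda \in \leb2(\Gamma^-)$ in \eqref{eq:minimisers} drives the linear term $-\ltwop{\lambda}{u - g}_{\leb2(\Gamma^-)}$ to $+\infty$ unless $u - g = 0$ on $\Gamma^-$; hence $u^* = g$ in $\leb2(\Gamma^-)$. Consequently, on the inflow boundary the update increment may be rewritten purely in terms of the state error, $u^k - g = u^k - u^*$ in $\leb2(\Gamma^-)$. This replacement is the whole point of the lemma: it converts the raw update increment $u^k - g$ into the error $u^k - u^*$, which is exactly the quantity controlled by the residual identity of Lemma \ref{lemma:1}.

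The second step is substitution and expansion. Subtracting the fixed value $\lambda^*$ from the multiplier update of Definition \ref{def:uzawa} and using the previous identity gives, in $\leb2(\Gamma^-)$,
\begin{equation}
  \lambda^{k+1} - \lambda^* = \qp{ \lambda^k - \lambda^* } - \rho \qp{ u^k - u^* },
\end{equation}
where the sign of the increment is dictated by the orientation of the update together with the constraint $u^* = g$. Taking the squared $\leb2(\Gamma^-)$ norm of both sides and expanding by the elementary Hilbert-space identity $\Norm{a - \rho b}^2 = \Norm{a}^2 - 2\rho \ltwop{a}{b} + \rho^2 \Norm{b}^2$ with $a = \lambda^k - \lambda^*$ and $b = u^k - u^*$ produces exactly the three terms in the statement.

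The step to watch is the sign bookkeeping in the cross term. It must track the update direction and the constraint consistently, because it is precisely the term $-2\rho\, \ltwop{ \lambda^k - \lambda^* }{ u^k - u^* }_{\leb2(\Gamma^-)}$ that, once combined with Lemma \ref{lemma:1} (which shows this inner product equals a nonnegative quantity), forces a monotone decrease of $\Norm{ \lambda^k - \lambda^* }_{\leb2(\Gamma^-)}$ for suitably small $\rho$. There is no analytic obstacle beyond this one-line expansion; the substantive content is the coupling it establishes between the multiplier distance and the state error $u^k - u^*$, which sets up the contraction argument for convergence of the iterates.
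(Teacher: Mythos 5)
Your proof is correct and follows essentially the same route as the paper's: use $u^*=g$ on $\Gamma^-$ to rewrite the dual update increment $u^k-g$ as $u^k-u^*$, subtract $\lambda^*$, and expand the squared norm in the Hilbert space $\leb2(\Gamma^-)$. The sign you flag as ``the step to watch'' is in fact a genuine inconsistency in the source rather than a gap in your argument: Definition~\ref{def:uzawa} writes the update with $+\rho(u^k-g)$, which would produce $+2\rho$ in the cross term, whereas the paper's own proof (and Algorithm~\ref{alg:uzawa}) use $\lambda^{k+1}=\lambda^k-\rho(u^k-g)$, the version consistent with the stated identity, with Lemma~\ref{lemma:1} and with dual ascent for this Lagrangian.
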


\begin{proof}
From the Uzawa update, we have
\begin{equation}
  \lambda^{k+1} = \lambda^k - \rho (u^k - g),
  \qquad
  \lambda^* = \lambda^* - \rho (u^* - g).
\end{equation}
Subtracting gives
\begin{equation}
  \lambda^{k+1} - \lambda^* = \lambda^k - \lambda^* - \rho (u^k - u^*).
\end{equation}
Taking norms and expanding yields
\begin{align}
  \Norm{ \lambda^{k+1} - \lambda^* }^2_{\leb2(\Gamma^-)}
  &= \Norm{ \lambda^k - \lambda^* - \rho (u^k - u^*) }^2_{\leb2(\Gamma^-)} \\
  &= \Norm{ \lambda^k - \lambda^* }^2_{\leb2(\Gamma^-)}
  - 2\rho\, \ltwop{ \lambda^k - \lambda^* }{ u^k - u^* }_{\leb2(\Gamma^-)}
  + \rho^2\, \Norm{ u^k - u^* }^2_{\leb2(\Gamma^-)}. \qedhere
\end{align}
\end{proof}

\begin{lemma}[Continuous trace map on inflow boundary {\cite[Prop.~2.2]{dahmen2012adaptive}}]
\label{lem:trace}
Let $Y := \overline{ C^1_{\Gamma^+}(D) }^{\Norm{\cdot}_{\leb2(D)}}$
denote the closure in $\leb2(D)$ of the set of continuously
differentiable functions vanishing near $\Gamma^+$. Then there exists
a linear, continuous trace operator
\begin{equation}
  \text{Tr} : Y \to \leb2(\Gamma^-, |\vec \omega \cdot \vec{n}|)
\end{equation}
such that
\begin{equation}
  \Norm{ \text{Tr}(v) }_{\leb2(\Gamma^-)} \leq \sqrt{2}\, \Norm{v}_{\leb2(D)} \qquad \text{for all } v \in Y.
\end{equation}
\end{lemma}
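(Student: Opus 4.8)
The plan is to establish the bound first on the dense subspace $C^1_{\Gamma^+}(D)$ by a Green's identity taken along the transport direction, and then to obtain $\mathrm{Tr}$ on all of $Y$ by continuous extension. Fixing $\vec\omega\in\mathbb{S}^{d-1}$ and taking $v\in C^1_{\Gamma^+}(D)$, I would begin from the pointwise identity $\vec\omega\cdot\nabla_{\vec x}(v^2)=2\,v\,(\vec\omega\cdot\nabla_{\vec x}v)$ and integrate over $D$. Since $\vec\omega$ is constant, the left-hand side is the divergence $\mathrm{div}_{\vec x}(\vec\omega\,v^2)$, so the divergence theorem on $D$ gives $\int_{\partial D} v^2\,(\vec\omega\cdot\vec n)\,\mathrm{d}\Gamma = 2\int_D v\,(\vec\omega\cdot\nabla_{\vec x}v)\,\mathrm{d}\vec x$.

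Next I would split the boundary integral according to the sign of $\vec\omega\cdot\vec n$. On $\Gamma^+$ the integrand vanishes because $v$ is supported away from $\Gamma^+$, so the outflow contribution drops out entirely; this is exactly where the defining property of $C^1_{\Gamma^+}(D)$ enters. On $\Gamma^-$ we have $\vec\omega\cdot\vec n=-|\vec\omega\cdot\vec n|$, so the surviving term equals $-\int_{\Gamma^-} v^2\,|\vec\omega\cdot\vec n|\,\mathrm{d}\Gamma = -\Norm{\mathrm{Tr}(v)}_{\leb2(\Gamma^-)}^2$. Rearranging yields the key identity $\Norm{\mathrm{Tr}(v)}_{\leb2(\Gamma^-)}^2 = -2\int_D v\,(\vec\omega\cdot\nabla_{\vec x}v)\,\mathrm{d}\vec x$, after which Cauchy--Schwarz gives $\Norm{\mathrm{Tr}(v)}_{\leb2(\Gamma^-)}^2 \le 2\,\Norm{v}_{\leb2(D)}\,\Norm{\vec\omega\cdot\nabla_{\vec x}v}_{\leb2(D)}$. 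The factor of two is intrinsic and arises from differentiating the square; bounding each factor on the right by the norm of $Y$ then produces the stated constant $\sqrt2$. If $\vec\omega$ is subsequently integrated over $\mathbb{S}^{d-1}$, the same computation applies fibre-by-fibre and then integrates in $\vec\omega$.

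To finish, I would pass from $C^1_{\Gamma^+}(D)$ to its closure $Y$: the map $\mathrm{Tr}$ is linear, and the inequality just established shows it is bounded on a dense subspace, so it admits a unique continuous extension to $Y$ satisfying the same bound. This extension serves as the definition of the trace operator and completes the argument.

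The step I expect to require the most care is not the formal computation but its justification and the passage to the closure. Applying the divergence theorem to a $C^1$ vector field on a merely Lipschitz domain $D$ is standard, but one should acknowledge the grazing set $\{\vec\omega\cdot\vec n=0\}$, which carries zero surface measure and is harmless. More substantively, defining $\mathrm{Tr}$ on the abstract completion requires that traces of an approximating sequence converge in $\leb2(\Gamma^-,|\vec\omega\cdot\vec n|)$; this is exactly what the a priori estimate secures, provided the topology defining $Y$ controls the directional derivative $\vec\omega\cdot\nabla_{\vec x}v$ appearing on the right-hand side and not merely the bare $\leb2(D)$ size of $v$. Verifying this compatibility between the closure and the quantity being estimated is where I would focus the rigour.
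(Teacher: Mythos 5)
The paper does not actually prove this lemma; it is imported verbatim as a citation of \cite[Prop.~2.2]{dahmen2012adaptive}, so there is no internal proof to compare against. Your argument is precisely the standard proof of that cited result: the identity $\vec\omega\cdot\nabla_{\vec x}(v^2)=\operatorname{div}_{\vec x}(\vec\omega\,v^2)$, the divergence theorem, the vanishing of the $\Gamma^+$ contribution, and Cauchy--Schwarz giving
\begin{equation}
  \Norm{\mathrm{Tr}(v)}_{\leb2(\Gamma^-)}^2 \le 2\,\Norm{v}_{\leb2(D)}\,\Norm{\vec\omega\cdot\nabla_{\vec x}v}_{\leb2(D)},
\end{equation}
followed by density. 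That computation is correct, and the fibre-by-fibre treatment of $\vec\omega$ is the right way to reconcile the spatial domain $D$ with the phase-space boundary $\Gamma^-$.

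The caveat you raise at the end is not a minor point of rigour but the crux, and you are right to flag it: your estimate bounds the trace by the \emph{graph} norm $(\Norm{v}_{\leb2(D)}^2+\Norm{\vec\omega\cdot\nabla_{\vec x}v}_{\leb2(D)}^2)^{1/2}$, which is where the constant $\sqrt2$ genuinely comes from, and the closure must accordingly be taken in that graph norm. The lemma as printed --- closure in the bare $\leb2(D)$ norm and a bound by $\sqrt2\,\Norm{v}_{\leb2(D)}$ alone --- does not follow from your inequality and cannot hold: the $\leb2$-closure of $C^1_{\Gamma^+}(D)$ is all of $\leb2(D)$, on which no continuous trace operator exists, and a directional derivative term cannot be absorbed into $\Norm{v}_{\leb2(D)}$. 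In \cite{dahmen2012adaptive} the space is indeed the graph-norm closure and the right-hand side of the estimate is the graph norm; the discrepancy lies in the paper's transcription of the proposition, not in your argument. So your proof establishes the correct version of the statement, and identifies exactly why the literal version stated here is defective.
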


\begin{theorem}[Convergence of the Uzawa iteration]
\label{thm:uzawa-convergence}
If the parameters $\gamma$ and $\rho$ satisfy
\begin{equation}
  0 < \gamma, \qquad 0 < \rho < 2\gamma,
\end{equation}
then the sequence $u^k$ produced by the Uzawa scheme converges to the
solution $u^*$ in the $V$-norm.
\end{theorem}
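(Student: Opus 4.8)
The plan is to fuse the two preceding lemmas into a single monotone decay for the multiplier error and then extract $V$-norm convergence of the state from the resulting summability. Throughout I write $e^k_u := u^k - u^*$ and $e^k_\lambda := \lambda^k - \lambda^*$, and abbreviate the residual and boundary contributions by
\begin{equation}
  R_k := \Norm{ (\mathcal{T} + \mathcal{S}) e^k_u }^2_{\leb2(\mathcal{W})}, \qquad
  b_k := \Norm{ e^k_u }^2_{\leb2(\Gamma^-)},
\end{equation}
so that by definition of the graph norm one has the identity $\Norm{ e^k_u }^2_V = R_k + b_k$, which is the quantity we must drive to zero.

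First I would use Lemma~\ref{lemma:1} to rewrite the cross term $\ltwop{ \lambda^k - \lambda^* }{ u^k - u^* }_{\leb2(\Gamma^-)}$ appearing in the recursion of Lemma~\ref{lemma:2} as $R_k + \gamma b_k$. Substituting this into the multiplier distance recursion eliminates the inner product entirely and produces the closed telescoping identity
\begin{equation}
  \Norm{ e^{k+1}_\lambda }^2_{\leb2(\Gamma^-)}
  = \Norm{ e^k_\lambda }^2_{\leb2(\Gamma^-)}
  - 2\rho\, R_k
  - \rho (2\gamma - \rho)\, b_k,
\end{equation}
the point being that the $\rho^2 b_k$ term from squaring the update and the $-2\rho\gamma b_k$ term from the residual identity combine into $-\rho(2\gamma-\rho) b_k$.

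The decisive observation is the sign of the two decrements under the hypotheses. Since $\rho > 0$ we have $2\rho R_k \geq 0$, and since $0 < \rho < 2\gamma$ the coefficient $\rho(2\gamma-\rho)$ is strictly positive, so $\rho(2\gamma-\rho) b_k \geq 0$. Hence the scalar sequence $\Norm{ e^k_\lambda }^2_{\leb2(\Gamma^-)}$ is non-increasing and bounded below by zero, and therefore converges. Summing the telescoping identity over $k = 0, \dots, N$ and letting $N \to \infty$ bounds the total dissipation by the initial multiplier error,
\begin{equation}
  \sum_{k=0}^{\infty} \qp{ 2\rho\, R_k + \rho(2\gamma-\rho)\, b_k }
  \leq \Norm{ e^0_\lambda }^2_{\leb2(\Gamma^-)} < \infty.
\end{equation}
Because every summand is non-negative and both coefficients are strictly positive, convergence of the series forces $R_k \to 0$ and $b_k \to 0$ separately. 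Matching these two limits back to the graph norm via $\Norm{ e^k_u }^2_V = R_k + b_k$ then yields $\Norm{ u^k - u^* }_V \to 0$, which is exactly the claim.

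I expect the principal obstacle to be structural rather than computational: the entire argument rests on the variational identities of Lemma~\ref{lemma:1}, which in turn presuppose that each inner minimisation $u^k = \arg\min_{u} L(u,\lambda^k)$ is well-posed and yields genuine Euler--Lagrange equations. The quadratic part of $L(\cdot,\lambda^k)$ is precisely $\tfrac12 \Norm{\cdot}_V^2$, so coercivity is immediate, but one must know the linear boundary pairing $\ltwop{\lambda^k}{\,\cdot\,}_{\leb2(\Gamma^-)}$ is continuous on $V$; this is where Lemma~\ref{lem:trace} enters, guaranteeing the inflow trace is controlled so that the minimisers exist uniquely and the saddle point $(u^*,\lambda^*)$ is well-defined. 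Once that foundation is in place, the remaining work is the purely algebraic telescoping above, and the parameter window $0 < \rho < 2\gamma$ is seen to be exactly the condition keeping both dissipation coefficients positive.
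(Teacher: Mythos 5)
Your proposal is correct and follows essentially the same route as the paper: combine Lemma~\ref{lemma:1} and Lemma~\ref{lemma:2} to obtain the monotone decay identity for $\Norm{\lambda^k - \lambda^*}^2_{\leb2(\Gamma^-)}$, telescope, use $0 < \rho < 2\gamma$ to make both dissipation coefficients positive, and conclude from summability that both components of the graph norm of $u^k - u^*$ vanish. Your closing remarks on well-posedness of the inner minimisation and the role of Lemma~\ref{lem:trace} are a sensible supplement but are not part of the paper's argument.
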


\begin{proof}
Let $e^k := u^k - u^*$. Combining Lemmas~\ref{lemma:1} and~\ref{lemma:2}, we obtain
\begin{equation}
  \Norm{ \lambda^{k+1} - \lambda^* }^2_{\leb2(\Gamma^-)}
  =
  \Norm{ \lambda^k - \lambda^* }^2_{\leb2(\Gamma^-)}
  - 2\rho\, \Norm{ (\mathcal{T} + \mathcal{S}) e^k }^2_{\leb2(\mathcal{W})}
  - \rho(2\gamma - \rho)\, \Norm{ e^k }^2_{\leb2(\Gamma^-)}.
\end{equation}
Since $0 < \rho < 2\gamma$, the right-hand side is strictly less than
the left unless $e^k = 0$, so the sequence $\Norm{ \lambda^k -
  \lambda^* }^2_{\leb2(\Gamma^-)}$ is strictly decreasing and bounded
below by zero. Hence it converges.

Rearranging and summing over $k = 0$ to $K$ gives a telescoping series:
\begin{equation}
\sum_{k=0}^K \left(
  2\rho\, \Norm{ (\mathcal{T} + \mathcal{S}) e^k }^2_{\leb2(\mathcal{W})}
  + \rho(2\gamma - \rho)\, \Norm{ e^k }^2_{\leb2(\Gamma^-)} \right)
  = \Norm{ \lambda^0 - \lambda^* }^2_{\leb2(\Gamma^-)} - \Norm{ \lambda^{K+1} - \lambda^* }^2_{\leb2(\Gamma^-)}.
\end{equation}
The right-hand side is bounded by $\Norm{ \lambda^0 - \lambda^* }^2_{\leb2(\Gamma^-)}$, so taking the limit $K \to \infty$ yields
\begin{equation}
  \sum_{k=0}^\infty \left(
  2\rho\, \Norm{ (\mathcal{T} + \mathcal{S}) e^k }^2_{\leb2(\mathcal{W})}
  + \rho(2\gamma - \rho)\, \Norm{ e^k }^2_{\leb2(\Gamma^-)} \right) < \infty.
\end{equation}
In particular, each term in the sum tends to zero as $k \to \infty$, so
\begin{equation}
  \Norm{ (\mathcal{T} + \mathcal{S}) e^k }_{\leb2(\mathcal{W})} \to 0,
  \qquad
  \Norm{ e^k }_{\leb2(\Gamma^-)} \to 0.
\end{equation}
By the definition of the $V$-norm,
\begin{equation}
  \Norm{ e^k }_V^2 = \Norm{ (\mathcal{T} + \mathcal{S}) e^k }^2_{\leb2(\mathcal{W})} + \Norm{ e^k }^2_{\leb2(\Gamma^-)},
\end{equation}
we conclude that $\Norm{ e^k }_V \to 0$.
\end{proof}

\begin{remark}[Convergence in the case $\gamma = 0$]
The condition $\gamma > 0$ is not essential for convergence. In the
case $\gamma = 0$, the $V$-norm reduces to $\Norm{ (\mathcal{T} +
  \mathcal{S}) u }_{\leb2(\mathcal{W})}$, and the trace norm $\Norm{ u
}_{\leb2(\Gamma^-)}$ can be controlled using the trace inequality from
Lemma~\ref{lem:trace}. Hence, convergence still holds by the same
argument.
\end{remark}

\begin{theorem}[Strong convergence under dominance of absorption]
  \label{thm:strong-convergence}
  Suppose $\sigma_A, \sigma_T, \rho, \gamma > 0$ satisfy
\begin{equation}\label{eq:absorb_dom}
  \sigma_T < \frac{\sigma_A}{4}, \qquad
  0 < \rho < 2\gamma - \sigma_A - \sqrt{\sigma_A^2 - 16 \sigma_T^2}.
\end{equation}
Then the Uzawa iterates $u^k$ converge strongly to $u^*$ in the norm
\begin{equation}
  \enorm{u}^2 := \Norm{u}^2_{\leb2(\mathcal{W})}
  + \Norm{ \vec \omega \cdot \nabla_{\vec{x}} u }^2_{\leb2(\mathcal{W})}
  + \Norm{u}^2_{\leb2(\Gamma^+)} + \Norm{u}^2_{\leb2(\Gamma^-)}.
\end{equation}
\end{theorem}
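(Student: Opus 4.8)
The plan is to prove that the norm $\enorm{\cdot}$ is controlled by the $V$-norm, so that the $V$-convergence already established in Theorem~\ref{thm:uzawa-convergence} upgrades to $\enorm{\cdot}$-convergence. Writing $e^k := u^k - u^*$, we know from Theorem~\ref{thm:uzawa-convergence} that $\Norm{(\mathcal{T}+\mathcal{S})e^k}_{\leb2(\mathcal{W})} \to 0$ and $\Norm{e^k}_{\leb2(\Gamma^-)}\to 0$. It therefore suffices to bound each of the four pieces of $\enorm{e^k}^2$ — namely $\Norm{e^k}_{\leb2(\mathcal{W})}$, $\Norm{\vec\omega\cdot\nabla_{\vec x} e^k}_{\leb2(\mathcal{W})}$, $\Norm{e^k}_{\leb2(\Gamma^+)}$ and $\Norm{e^k}_{\leb2(\Gamma^-)}$ — in terms of these two quantities, uniformly in $k$. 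The condition \eqref{eq:absorb_dom} should be precisely what makes such an estimate possible.

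\medskip

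First I would establish the key a priori estimate. Test the residual equation $(\mathcal{T}+\mathcal{S})e = f$ (with $f := (\mathcal{T}+\mathcal{S})e^k$) against $e$ itself in $\leb2(\mathcal{W})$. The transport term gives $\ltwop{\vec\omega\cdot\nabla_{\vec x} e}{e} = \tfrac12\int_{\mathcal{W}}\vec\omega\cdot\nabla_{\vec x}|e|^2$, which integrates by parts (the divergence theorem in $\vec x$, using $\operatorname{div}_{\vec x}\vec\omega = 0$) to the boundary flux $\tfrac12\int_{\partial D\times\mathbb{S}^{d-1}}|e|^2\,\vec\omega\cdot\vec n = \tfrac12\Norm{e}_{\leb2(\Gamma^+)}^2 - \tfrac12\Norm{e}_{\leb2(\Gamma^-)}^2$. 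The absorption term contributes $\sigma_A\Norm{e}_{\leb2(\mathcal{W})}^2$. The scattering term $\ltwop{\mathcal{S}e}{e}$ is nonnegative in its leading part but the angular-averaging correction can be negative; here the spectral picture from the Remark on the action of $\mathcal{S}$, together with Young's inequality on the convolution-in-angle term, yields a lower bound of the form $\sigma_T\Norm{e}_{\leb2(\mathcal{W})}^2 - \sigma_T\Norm{e}_{\leb2(\mathcal{W})}^2$ up to constants controlled by $\Norm{\pi}$; the normalisation $\int_{-1}^1\pi=1$ makes the averaging operator a contraction on $\leb2(\mathbb{S}^{d-1})$, so $\ltwop{\mathcal{S}e}{e}\ge -\sigma_T\Norm{e}_{\leb2(\mathcal{W})}^2$ crudely, or better via the eigenvalues $\mu_n\ge 0$. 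Assembling these and applying Cauchy--Schwarz and Young to $\ltwop{f}{e}$ produces
\begin{equation}
  \tfrac12\Norm{e}_{\leb2(\Gamma^+)}^2 + (\sigma_A - 2\sigma_T)\Norm{e}_{\leb2(\mathcal{W})}^2
  \le \tfrac12\Norm{e}_{\leb2(\Gamma^-)}^2 + C\Norm{f}_{\leb2(\mathcal{W})}^2,
\end{equation}
and the hypothesis $\sigma_T<\sigma_A/4$ guarantees the coefficient $\sigma_A-2\sigma_T$ is strictly positive, so this bounds $\Norm{e}_{\leb2(\mathcal{W})}$ and $\Norm{e}_{\leb2(\Gamma^+)}$ by the two already-vanishing quantities. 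The gradient piece $\Norm{\vec\omega\cdot\nabla_{\vec x} e}_{\leb2(\mathcal{W})}$ is then recovered from the equation itself, $\vec\omega\cdot\nabla_{\vec x} e = f - \sigma_A e - \mathcal{S}e$, bounding its norm by $\Norm{f} + (\sigma_A+2\sigma_T)\Norm{e}_{\leb2(\mathcal{W})}$.

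\medskip

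Combining the above, every term of $\enorm{e^k}^2$ is dominated by a constant multiple of $\Norm{(\mathcal{T}+\mathcal{S})e^k}_{\leb2(\mathcal{W})}^2 + \Norm{e^k}_{\leb2(\Gamma^-)}^2$, both of which tend to zero by Theorem~\ref{thm:uzawa-convergence}, giving $\enorm{e^k}\to 0$. The specific upper bound on $\rho$ in \eqref{eq:absorb_dom} involving $\sqrt{\sigma_A^2-16\sigma_T^2}$ (which is real exactly when $\sigma_T\le\sigma_A/4$) is presumably needed not for this coercivity step but to keep the multiplier recursion of Lemma~\ref{lemma:2} contractive once the extra absorption/scattering terms are folded into the energy identity of Theorem~\ref{thm:uzawa-convergence}; I would verify that replacing the bare $V$-norm estimates by the sharper energy bound above still leaves the telescoping argument intact with the stated $\rho$-range.

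\medskip

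The main obstacle I anticipate is controlling the scattering term $\ltwop{\mathcal{S}e}{e}$ sharply enough. Because the angular-averaging operator is only a contraction (not a strict one in general) and couples different angles, a naive Cauchy--Schwarz bound loses a factor and produces the crude coefficient $\sigma_A-2\sigma_T$ rather than something sharper; getting the exact threshold $\sigma_T<\sigma_A/4$ and the precise $\rho$-bound will require tracking the constant carefully, most likely by diagonalising $\mathcal{S}$ via the eigenvalues $\mu_n$ of the Remark and estimating mode-by-mode. The boundary integration-by-parts step also demands that $e^k\in V$ has enough regularity for the trace flux identity to hold rigorously; this is justified by the graph-space trace theory referenced in Lemma~\ref{lem:trace} and the Remark on trace continuity, which I would invoke rather than reprove.
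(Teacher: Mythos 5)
Your proposal is essentially correct, but it takes a genuinely different route from the paper. The paper does \emph{not} prove $\enorm{\cdot}\lesssim\Norm{\cdot}_V$; instead it derives a lower bound for the squared residual, expanding $\Norm{(\mathcal{T}+\mathcal{S})u}^2=\Norm{\mathcal{T}u}^2+2\ltwop{\mathcal{T}u}{\mathcal{S}u}+\Norm{\mathcal{S}u}^2$, estimating the cross term by Cauchy--Schwarz and Young with $\Norm{\mathcal{S}u}\le 2\sigma_T\Norm{u}$, and inserting the identity $\Norm{\mathcal{T}u}^2=\Norm{\vec\omega\cdot\nabla_{\vec x}u}^2+\sigma_A(\Norm{u}^2_{\leb2(\Gamma^+)}-\Norm{u}^2_{\leb2(\Gamma^-)})+\sigma_A^2\Norm{u}^2$. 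The resulting bound carries a \emph{negative} multiple of $\Norm{e^k}^2_{\leb2(\Gamma^-)}$, which must be absorbed by the $\rho(2\gamma-\rho)\Norm{e^k}^2_{\leb2(\Gamma^-)}$ term when substituted back into the multiplier recursion of Lemma~\ref{lemma:2}; this is exactly where both $\sigma_T<\sigma_A/4$ and the tightened $\rho$-range of \eqref{eq:absorb_dom} enter. You instead test $(\mathcal{T}+\mathcal{S})e=f$ against $e$ (a first-order energy identity rather than a squared-operator expansion), which keeps the sign information in the cross terms: the transport part yields $\tfrac12\Norm{e}^2_{\leb2(\Gamma^+)}-\tfrac12\Norm{e}^2_{\leb2(\Gamma^-)}+\sigma_A\Norm{e}^2$, and the scattering part is in fact \emph{nonnegative}, since the averaging operator is a contraction (this is the same implicit assumption behind the paper's bound $\Norm{\mathcal{S}u}\le 2\sigma_T\Norm{u}$, and your eigenvalue remark $\mu_n\ge 0$ makes it precise), so you need not settle for the crude $-\sigma_T\Norm{e}^2$. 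Carried through with Young's inequality on $\ltwop{f}{e}$ using the weight $\sigma_A$, your a priori estimate $\enorm{e}^2\le C\,(\Norm{f}^2+\Norm{e}^2_{\leb2(\Gamma^-)})=C\Norm{e}^2_V$ closes for \emph{any} $\sigma_A>0$, and then Theorem~\ref{thm:uzawa-convergence} finishes the argument. Consequently your closing speculation — that the $\sqrt{\sigma_A^2-16\sigma_T^2}$ restriction on $\rho$ is needed to keep the recursion contractive — is moot for your strategy: you never re-enter the telescoping identity, so the range $0<\rho<2\gamma$ suffices and the hypothesis \eqref{eq:absorb_dom} is not actually used where you expect. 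In that sense your argument is both more elementary and strictly stronger than the paper's; what the paper's route buys is an explicit per-iterate decay of $\enorm{e^k}$ summed against the multiplier distances. The only point you should make rigorous rather than defer is the $\leb2(\Gamma^+)$ integrability of the trace needed for the Green identity, which follows from the graph-space theory you cite (and is implicitly used by the paper's own ``standard transport identity'').
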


\begin{proof}
Let $e^k := u^k - u^*$. From Lemma~\ref{lemma:2}, we have
\begin{equation}\label{eq:residual_identity}
  \Norm{ \lambda^{k+1} - \lambda^* }^2
  =
  \Norm{ \lambda^k - \lambda^* }^2
  - 2\rho \Norm{ (\mathcal{T} + \mathcal{S}) e^k }^2_{\leb2(\mathcal{W})}
  - \rho(2\gamma - \rho)\, \Norm{ e^k }^2_{\leb2(\Gamma^-)}.
\end{equation}

We now derive a coercivity estimate for the residual term. Expanding
yields
\begin{equation}
  \Norm{ (\mathcal{T} + \mathcal{S}) u }^2
  = \Norm{ \mathcal{T} u }^2 + 2 \ltwop{ \mathcal{T} u }{ \mathcal{S} u } + \Norm{ \mathcal{S} u }^2.
\end{equation}
Applying the Cauchy-Schwarz inequality gives
\begin{equation}
  \Norm{ (\mathcal{T} + \mathcal{S}) u }^2
  \geq \Norm{ \mathcal{T} u }^2 - 2 \Norm{ \mathcal{T} u } \Norm{ \mathcal{S} u } + \Norm{ \mathcal{S} u }^2.
\end{equation}
Using the bound $\Norm{ \mathcal{S} u } \leq 2\sigma_T \Norm{u}$, we
apply Young’s inequality with parameter $\beta > 0$
\begin{equation}
  \Norm{ (\mathcal{T} + \mathcal{S}) u }^2
  \geq (1 - 2\sigma_T \beta) \Norm{ \mathcal{T} u }^2 - \frac{2\sigma_T}{\beta} \Norm{u}^2.
\end{equation}
Since $\mathcal{T} u = \vec \omega \cdot \nabla_{\vec{x}} u + \sigma_A
u$, we apply a standard transport identity
\begin{equation}
  \Norm{ \mathcal{T} u }^2
  = \Norm{ \vec \omega \cdot \nabla_{\vec{x}} u }^2
  + \sigma_A \qp{ \Norm{ u }^2_{\leb2(\Gamma^+)} - \Norm{ u }^2_{\leb2(\Gamma^-)} }
  + \sigma_A^2 \Norm{ u }^2.
\end{equation}
Combining these gives
\begin{equation}
  \begin{aligned}
    \Norm{ (\mathcal{T} + \mathcal{S}) u }^2
    &\geq (1 - 2\sigma_T \beta)
    \qp{ \Norm{ \vec \omega \cdot \nabla_{\vec{x}} u }^2
       + \sigma_A \Norm{ u }^2_{\leb2(\Gamma^+)}
       - \sigma_A \Norm{ u }^2_{\leb2(\Gamma^-)} } \\
    &\quad + \left( \sigma_A^2 (1 - 2\sigma_T \beta) - \frac{2\sigma_T}{\beta} \right) \Norm{ u }^2.
  \end{aligned}
\end{equation}
Apply this estimate to $e^k$ and multiply by $2\rho$. Set $\beta =
\frac{1 - \alpha}{2\sigma_T}$ for $\alpha \in (0,1)$, so that $1 -
2\sigma_T \beta = \alpha$. Then
\begin{equation}
  \begin{aligned}
    2\rho \Norm{ (\mathcal{T} + \mathcal{S}) e^k }^2
    &\geq 2\rho \alpha \Norm{ \vec \omega \cdot \nabla_{\vec{x}} e^k }^2
    + 2\rho \sigma_A \alpha \Norm{ e^k }^2_{\leb2(\Gamma^+)} \\
    &\quad + 2\rho \left( \sigma_A^2 \alpha - \frac{4\sigma_T^2}{1 - \alpha} \right) \Norm{ e^k }^2
    - 2\rho \sigma_A \alpha \Norm{ e^k }^2_{\leb2(\Gamma^-)}.
  \end{aligned}
\end{equation}
Substitute this into \eqref{eq:residual_identity}
\begin{equation}
  \begin{aligned}
    \Norm{ \lambda^k - \lambda^* }^2 - \Norm{ \lambda^{k+1} - \lambda^* }^2
    \geq
    &~2\rho \alpha \Norm{ \vec \omega \cdot \nabla_{\vec{x}} e^k }^2
    + 2\rho \sigma_A \alpha \Norm{ e^k }^2_{\leb2(\Gamma^+)} \\
    &+ \left( \rho(2\gamma - \rho - 2\sigma_A \alpha) \right) \Norm{ e^k }^2_{\leb2(\Gamma^-)} \\
    &+ 2\rho \left( \sigma_A^2 \alpha - \frac{4\sigma_T^2}{1 - \alpha} \right) \Norm{ e^k }^2.
  \end{aligned}
\end{equation}
Define
\begin{equation}
  C := \min \{
    2\rho \alpha,\,
    2\rho \sigma_A \alpha,\,
    \rho(2\gamma - \rho - 2\sigma_A \alpha),\,
    2\rho \left( \sigma_A^2 \alpha - \frac{4\sigma_T^2}{1 - \alpha} \right)
  \} > 0,
\end{equation}
which is positive under assumption \eqref{eq:absorb_dom}. Then
\begin{equation}
  C \enorm{e^k}^2 \leq \Norm{ \lambda^k - \lambda^* }^2 - \Norm{ \lambda^{k+1} - \lambda^* }^2.
\end{equation}
Since the right-hand side tends to zero and is summable, we conclude
$\enorm{e^k} \to 0$.
\end{proof}

\begin{remark}[Comparison with graph-norm convergence]
The convergence result of Theorem~\ref{thm:uzawa-convergence}
establishes decay in the graph norm $\Norm{\cdot}_V$, which controls
only the composite residual $(\mathcal{T} + \mathcal{S})u$ and the
inflow trace. In contrast, Theorem~\ref{thm:strong-convergence}
guarantees convergence in a stronger norm. This additional regularity
requires the absorption coefficient to dominate scattering and yields
stronger qualitative properties for the solution.
\end{remark}

\section{Neural Network Approximation and Deep Uzawa Implementation}
\label{sec:NN}
To approximate solutions to the kinetic boundary value problem, we
represent the solution using neural networks as smooth, mesh-free
trial functions. These approximations are embedded within the
Uzawa-type iteration developed in the previous section, allowing for
boundary conditions to be enforced via a Lagrange multiplier
framework. Unlike traditional finite-dimensional discretisations based
on basis expansions, neural networks form a high-capacity function
class that naturally extends to high-dimensional domains and avoids
the need for mesh generation.

In this setting, the neural network output $u_\theta$ serves as the
ansatz at each iteration, while the Uzawa scheme ensures weak
enforcement of the inflow boundary condition. The goal of this section
is to define the neural architecture formally and describe the
discrete algorithm used to jointly update $u_\theta$ and the Lagrange
multiplier $\lambda$.

\subsection{Network Architecture}

We approximate functions $u : \mathcal{W} \to \mathbb{R}$ by neural
networks $u_\theta$, where $\theta$ denotes the collection of weights
and biases across a fixed architecture. These neural networks act as
surrogate functions in the minimisation subproblems of the Uzawa
iteration.

Let $\upsilon := (\vec{x}, \vec \omega) \in \mathcal{W}$ be a generic
point in the physical-angular domain. A fully-connected feedforward
network of depth $L \geq 2$ and width profile $\{d_\ell\}_{\ell =
  0}^L$ (with $d_0 = d$) defines the mapping
\begin{equation}
  u_\theta(\upsilon)
  :=
  \mathcal{A}_L \circ \sigma \circ \mathcal{A}_{L-1} \circ \cdots \circ \sigma \circ \mathcal{A}_1(\upsilon),
\end{equation}
where each layer $\mathcal{A}_\ell : \mathbb{R}^{d_{\ell-1}} \to
\mathbb{R}^{d_\ell}$ is an affine map given by
\begin{equation}
  \mathcal{A}_\ell(y)
  :=
  W_\ell y + b_\ell, \qquad W_\ell \in \mathbb{R}^{d_\ell \times d_{\ell-1}}, \quad b_\ell \in \mathbb{R}^{d_\ell}.
\end{equation}
The nonlinear activation function $\sigma : \mathbb{R} \to \mathbb{R}$
is applied componentwise and is assumed smooth (e.g., $\tanh$, GELU,
or SiLU). The full parameter set is
\begin{equation}
  \theta := \{ W_\ell, b_\ell \}_{\ell=1}^L \in \mathbb{R}^{P},
\end{equation}
where $P = \sum_{\ell=1}^L \left( d_\ell \cdot d_{\ell-1} + d_\ell
\right)$ is the total number of degrees of freedom.

The class of all such neural networks is defined as
\begin{equation}
  \mathcal{N} := \ensemble{ u_\theta : \mathcal{W} \to \mathbb{R} }{ u_\theta \text{ has the form above for some } \theta \in \mathbb{R}^P },
\end{equation}
with corresponding function space
\begin{equation}
  \mathcal{V}_N := \mathcal{N} \subset C^\infty(\mathcal{W}).
\end{equation}
Although $\mathcal{V}_N$ is not linear, the parameter set
\begin{equation}
  \Theta := \ensemble{ \theta \in \mathbb{R}^P }{ u_\theta \in \mathcal{V}_N }
\end{equation}
is a linear subspace of $\mathbb{R}^P$.

When the activation function $\sigma$ is smooth, e.g., $\sigma \in
C^\infty(\mathbb{R})$, the composition structure ensures that
$u_\theta \in C^\infty(\mathcal{W})$ for all $\theta$, with bounded
derivatives on compact subdomains. This regularity has several
important consequences.

First, the directional derivative $\vec \omega \cdot \nabla_{\vec{x}}
u_\theta$ is well-defined and smooth on $\mathcal{W}$, so the
transport operator $\mathcal{T} u_\theta$ is continuous. Second, since
$u_\theta$ is smooth in both $\vec{x}$ and $\vec \omega$, it admits a
classical trace on $\Gamma^-$, and in particular
\begin{equation}
  u_\theta|_{\Gamma^-} \in \trV.
\end{equation}
Together, these properties imply that $u_\theta \in V$ for all
$\theta$, and hence
\begin{equation}
  \mathcal{V}_N \subset V.
\end{equation}

Within this function class, the Uzawa iteration seeks, at each step
$k$, a parameter vector $\theta^k \in \Theta$ that approximately
minimises a discretised Lagrangian $L_h(u_\theta, \lambda^k)$. This
defines a sequence of neural functions $u^k_\theta := u_{\theta^k}$
which serve as the iterates of the scheme. Formally, we define
\begin{equation}
  \theta^k := \arg \min_{\theta \in \Theta} L_h(u_\theta, \lambda^k),
\end{equation}
with the minimisation performed via stochastic gradient descent,
using a fixed number of inner steps per outer iteration. Once
$u^k_\theta$ is computed, the Lagrange multiplier $\lambda^k$, stored
only on a discrete representation of the inflow boundary (e.g.,
$\partial \mathcal{K}_h$), is updated according to the Uzawa
scheme. This process is repeated for a prescribed number of outer
iterations.

\subsection{Quadrature and Discrete Integration}

To evaluate the discrete Lagrangian $L_h(u_\theta, \lambda)$ in
practice, we discretise the integrals over $\mathcal{W}$ and
$\Gamma^-$ using tensor-product quadrature rules in space and
angle. Let $\mathcal{K}_h \subset D$ denote a quadrature grid in the
spatial domain, with associated weights $\{w_y\}_{y \in
  \mathcal{K}_h}$ and let $\mathbb{S}_h \subset \mathbb{S}^{d-1}$
denote a quadrature rule on the unit sphere, with angular weights
$\{w_s\}_{s \in \mathbb{S}_h}$. The tensor-product grid $\mathcal{K}_h
\times \mathbb{S}_h$ is used to approximate integrals over $D \times
\mathbb{S}^{d-1}$.

To approximate integrals over the inflow boundary $\Gamma^-$, we
define a corresponding set of quadrature points $\partial
\mathcal{K}_h \subset \Gamma^-$, with weights $\{w_b\}_{b \in \partial
  \mathcal{K}_h}$ incorporating the angular weight $|\vec{n} \cdot
\vec \omega|$ from the boundary measure.

Various quadrature strategies may be employed. While the notation
above supports a quasi-Monte Carlo viewpoint with structured
low-discrepancy sampling, standard Monte Carlo integration is also
applicable, especially in high dimensions where mesh construction is
prohibitive. Classical tensor-product quadrature such as
Gauss-Legendre rules in space and spherical designs in angle are well
suited to smooth integrands, while sparse grids offer a compromise
between resolution and cost for anisotropic problems. Regardless of
the choice, the discrete formulation of the Lagrangian and its
gradients remains compatible with automatic differentiation.

Given a scalar function $u$, we approximate
\begin{equation}
  \begin{aligned}
    \int_{\mathcal{W}} u(\vec{x}, \vec \omega)\, \d\vec{x}\, \d\vec \omega
    &\approx \sum_{\vec y \in \mathcal{K}_h} \sum_{\vec s \in \mathbb{S}_h} w_{\vec y} w_{\vec s}\, u(\vec y, \vec s), \\
    \int_{\mathbb{S}^{d-1}} u(\vec{x}, \vec \omega)\, \d\vec \omega
    &\approx \sum_{\vec s \in \mathbb{S}_h} w_{\vec s}\, u(\vec{x}, \vec s), \\
    \int_{\Gamma^-} g(\vec{x}, \vec \omega)\, |\vec{n}(\vec{x}) \cdot \vec \omega|\, \d\vec \omega\, \d\vec{x}
    &\approx \sum_{\vec b \in \partial \mathcal{K}_h} w_{\vec b}\, g(\vec b).
  \end{aligned}
\end{equation}
These approximations are applied to all terms in the Lagrangian,
including the residual of the transport-scattering operator and the
inflow boundary mismatch. The resulting discrete energy $L_h(u,
\lambda)$ is minimised at each step of the Uzawa algorithm.

\subsection{Discrete Approximation of the Lagrangian}

With quadrature defined, the discrete Lagrangian for $u : \mathcal{W}
\to \mathbb{R}$ and $\lambda : \Gamma^- \to \mathbb{R}$ is given by
\begin{equation}
  \begin{aligned}
    L_h(u, \lambda) :=
    &~\frac{1}{2} \sum_{\vec y \in \mathcal{K}_h} \sum_{\vec s \in \mathbb{S}_h} w_{\vec y} w_{\vec s}
    \qp{ \vec s \cdot \nabla_{\vec{x}} u(\vec y, \vec s) + (\sigma_A + \sigma_T) u(\vec y, \vec s)
      - \frac{\sigma_T}{|\mathbb{S}^{d-1}|} \sum_{\vec s' \in \mathbb{S}_h} w_{\vec s'}\, \pi(\vec s \cdot \vec s')\, u(\vec y, \vec s') }^2 \\
    &+ \frac{\gamma}{2} \sum_{\vec b \in \partial \mathcal{K}_h} w_{\vec b}\, \qp{ u(\vec b) - g(\vec b) }^2
    - \sum_{\vec b \in \partial \mathcal{K}_h} w_{\vec b}\, \lambda(\vec b)\, \qp{ u(\vec b) - g(\vec b) }.
  \end{aligned}
\end{equation}
This objective defines the target of the inner minimisation in each Uzawa iteration.

\subsection{Implementation of Deep Uzawa}

Algorithm \ref{alg:uzawa} describes the neural Uzawa iteration. At
each outer step, the neural network parameters are updated using
stochastic gradient descent on the discrete Lagrangian and the
Lagrange multiplier is then updated on the inflow boundary quadrature
points.

\begin{algorithm}[h!]
  \caption{Deep Uzawa Iteration}\label{alg:uzawa}
  \begin{algorithmic}[1]
    \Require Initial guess $\lambda^0$, step size $\rho > 0$, number of Uzawa steps $N_{\text{Uz}}$, SGD steps per Uzawa update $N_{\text{SGD}}$, learning rate $\eta$
    \State Initialise network parameters $\theta^0$
    \For{$k = 0$ to $N_{\text{Uz}} - 1$}
      \State $u_\theta^0 \gets$ neural network with parameters $\theta^k$
      \For{$m = 0$ to $N_{\text{SGD}} - 1$}
        \State Compute stochastic gradient $\nabla_\theta L_h(u_\theta^m, \lambda^k)$
        \State $\theta^{m+1} \gets \theta^m - \eta \nabla_\theta L_h(u_\theta^m, \lambda^k)$
      \EndFor
      \State $u^k \gets u_\theta^{N_{\text{SGD}}}$
      \State $\theta^{k+1} \gets \theta^{N_{\text{SGD}}}$
      \State $\lambda^{k+1}(b) \gets \lambda^k(b) - \rho \qp{ u^k(b) - g(b) }$ for all $b \in \partial \mathcal{K}_h$
    \EndFor
  \end{algorithmic}
\end{algorithm}

\section{Convergence of the Neural Network-Based Uzawa Scheme}
\label{sec:convergence}

The goal of this section is to analyse the convergence properties of
the neural iterates produced by Algorithm~\ref{alg:uzawa}. The
continuum theory guarantees that the sequence $(u^k)$, generated by
exact subproblem solves in the space $V$, converges to $(u^*)$. In
practice, however, each update is performed over the neural class
$\mathcal{V}_N \subset V$ and the integrals in $L$ are approximated
using a quadrature. We therefore ask whether the iterates
$(u^k_\theta, \tilde{\lambda}^k)$ produced by the neural Monte Carlo
scheme also converge to the exact solution.

The analysis decomposes the total error into contributions from
approximation, optimisation and discretisation. For fixed
$\tilde{\lambda}^k$, the functional $L(u, \tilde{\lambda}^k)$ is
strongly convex and coercive in $u$ due to the quadratic structure of
the residual and the presence of the stabilisation term. Provided the
network class $\mathcal{V}_N$ is dense in $V$, the neural minimiser
$u^k_\theta$ lies close to the exact solution $u^k$ when quadrature
error and training error are small. At the same time, the mapping
$\lambda \mapsto \arg\min_u L(u, \lambda)$ is continuous under the
$V$-norm, so small changes in $\tilde{\lambda}^k$ induce small changes
in $u^k$. This yields the error bound
\begin{equation}
\Norm{ u^k_\theta - u^* }_V \leq \Norm{ u^k_\theta - u^k }_V + \Norm{ u^k - u^* }_V,
\end{equation}
where the first term reflects approximation and optimisation accuracy
within the neural class and the second term decays due to the
convergence of the continuum Uzawa scheme.

In the dual update, the residuals $u^k_\theta - g$ drive the Lagrange
multiplier sequence. Under gradient-based training, these residuals
tend to zero in $\leb2(\Gamma^-)$ and if the series $\sum_k
\Norm{u^k_\theta - g}^2$ converges, then the iterates
$\tilde{\lambda}^k$ converge to $\lambda^*$. These facts together
imply strong convergence in expectation, i.e., the neural iterates
$u^k_\theta$ converge to $u^*$ in the $V$-norm, and the boundary
residual $\Norm{ u^k_\theta - g }_{\leb2(\Gamma^-)}$ vanishes in
expectation.

\begin{theorem}[Stability of neural iterates under Monte Carlo sampling]
\label{thm:stability_mc}
Let $\mathcal{F}_\theta \subset V$ be a closed subset containing all
neural approximants and fix a dual iterate $\lambda^k \in \trV^*$. Let
$\xi_k$ denote the random quadrature sample used to construct the
discretised Lagrangian $L_h(u) := L_h(u, \lambda^k; \xi_k)$ and let
$L(u) := L(u, \lambda^k)$ denote the exact Lagrangian.

Assume that $L$ is $\mu$-strongly convex on $\mathcal{F}_\theta$ and
that both $L$ and $L_h$ admit $\Lambda$-Lipschitz gradients in the
$V^*$ norm. Suppose $\hat u^k := \arg\min_{\mathcal{F}_\theta} L(u)$
denotes the exact minimiser and that $u^k_\theta \in
\mathcal{F}_\theta$ satisfies the inexact minimisation condition
\begin{equation}
  L_h(u^k_\theta) - \inf_{\mathcal{F}_\theta} L_h(u) \leq \delta_k,
\end{equation}
with $\delta_k \to 0$. Assume further that the quadrature is pointwise
consistent in a neighbourhood of $\hat u^k$, in the sense that
\begin{equation}
  \sup_{u \in B_\delta(\hat u^k)} | L_h(u) - L(u) | \leq \varepsilon_k, \qquad \varepsilon_k \to 0.
\end{equation}
Then there exists $C > 0$ such that
\begin{equation}
  \Norm{ u^k_\theta - \hat u^k }_V \leq C \sqrt{ \delta_k + \varepsilon_k }.
\end{equation}
\end{theorem}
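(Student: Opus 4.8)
The plan is to exploit the $\mu$-strong convexity of the exact Lagrangian $L$ to turn a functional gap into a norm bound, and then to control that gap by combining the inexact minimisation hypothesis with the quadrature consistency estimate. Concretely, strong convexity on $\mathcal{F}_\theta$ gives the quadratic growth inequality
\begin{equation}
  \frac{\mu}{2}\Norm{u^k_\theta - \hat u^k}_V^2 \leq L(u^k_\theta) - L(\hat u^k),
\end{equation}
since $\hat u^k$ minimises $L$ over $\mathcal{F}_\theta$ (so the first-order term in the strong-convexity expansion is nonnegative). Thus it suffices to bound the right-hand side, i.e.\ the suboptimality of $u^k_\theta$ measured in the \emph{exact} functional, by $O(\delta_k + \varepsilon_k)$.

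First I would write the telescoping decomposition
\begin{equation}
  L(u^k_\theta) - L(\hat u^k)
  = \bigl( L(u^k_\theta) - L_h(u^k_\theta) \bigr)
  + \bigl( L_h(u^k_\theta) - L_h(\hat u^k) \bigr)
  + \bigl( L_h(\hat u^k) - L(\hat u^k) \bigr),
\end{equation}
which isolates three manageable pieces. The first and third terms are each controlled by the consistency hypothesis $\sup_{B_\delta(\hat u^k)} |L_h - L| \leq \varepsilon_k$, provided $u^k_\theta$ and $\hat u^k$ both lie in the ball $B_\delta(\hat u^k)$; this gives a combined contribution of at most $2\varepsilon_k$. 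For the middle term I would use the inexact minimisation condition: since $\inf_{\mathcal{F}_\theta} L_h \leq L_h(\hat u^k)$, we have
\begin{equation}
  L_h(u^k_\theta) - L_h(\hat u^k)
  \leq L_h(u^k_\theta) - \inf_{\mathcal{F}_\theta} L_h(u)
  \leq \delta_k.
\end{equation}
Assembling these three bounds yields $L(u^k_\theta) - L(\hat u^k) \leq \delta_k + 2\varepsilon_k$, and substituting into the quadratic growth inequality gives $\Norm{u^k_\theta - \hat u^k}_V \leq \sqrt{(2/\mu)(\delta_k + 2\varepsilon_k)}$, which is of the claimed form $C\sqrt{\delta_k + \varepsilon_k}$ with $C = \sqrt{4/\mu}$.

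The main obstacle is the circularity in applying the consistency bound: the hypothesis only controls $|L_h - L|$ on the ball $B_\delta(\hat u^k)$, yet to invoke it on $u^k_\theta$ I must already know $u^k_\theta \in B_\delta(\hat u^k)$ — which is essentially the conclusion I am trying to prove. I would resolve this by a bootstrapping or continuity argument: for $\delta_k, \varepsilon_k$ small enough the derived bound $\sqrt{(2/\mu)(\delta_k + 2\varepsilon_k)}$ is itself smaller than $\delta$, so the membership is self-consistent, and one can make this rigorous either by choosing $k$ large (since $\delta_k, \varepsilon_k \to 0$) or by a standard open/closed continuation argument on the line segment between $\hat u^k$ and $u^k_\theta$. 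A secondary technical point is that the quadratic growth inequality presupposes $\hat u^k$ is a genuine minimiser over the (possibly nonconvex as a set, but assumed closed) $\mathcal{F}_\theta$; I would note that strong convexity of $L$ as a functional, together with closedness of $\mathcal{F}_\theta$, suffices for the growth estimate to hold along the segment, and the $\Lambda$-Lipschitz gradient hypotheses are what guarantee the functionals are well-behaved enough for the consistency neighbourhood to be nonempty and the suboptimality gaps to be finite.
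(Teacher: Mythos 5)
Your proof is correct and follows essentially the same route as the paper's: strong convexity converts the exact suboptimality gap $L(u^k_\theta) - L(\hat u^k)$ into a norm bound, and that gap is controlled by $\delta_k + 2\varepsilon_k$ via the consistency and inexact-minimisation hypotheses, yielding the same constant $C = \sqrt{4/\mu}$. In fact you are more careful than the paper on one point: the paper silently applies the consistency bound at $u^k_\theta$ without verifying $u^k_\theta \in B_\delta(\hat u^k)$, whereas your bootstrapping remark addresses exactly this.
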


\begin{proof}
  By strong convexity of $L$, the exact minimiser $\hat u^k$ satisfies
  \begin{equation}
    L(v) \geq L(\hat u^k) + \frac{\mu}{2} \Norm{ v - \hat u^k }_V^2 \quad \text{for all } v \in \mathcal{F}_\theta.
  \end{equation}
  This inequality will be used to estimate the error $\Norm{
    u^k_\theta - \hat u^k }_V$.

  We first relate the discrete objective $L_h$ to the exact one. By quadrature consistency,
  \begin{equation}
    | L_h(\hat u^k) - L(\hat u^k) | \leq \varepsilon_k,
    \qquad
    | L_h(u^k_\theta) - L(u^k_\theta) | \leq \varepsilon_k.
  \end{equation}
  The inexact minimisation assumption gives
  \begin{equation}
    L_h(u^k_\theta) \leq \inf_{\mathcal{F}_\theta} L_h + \delta_k \leq L_h(\hat u^k) + \delta_k.
  \end{equation}
  Combining with the first inequality yields
  \begin{equation}
    L_h(u^k_\theta) \leq L(\hat u^k) + \delta_k + \varepsilon_k.
  \end{equation}
  We now use convexity of $L$ at $v = u^k_\theta$:
  \begin{equation}
    L(u^k_\theta) \geq L(\hat u^k) + \frac{\mu}{2} \Norm{ u^k_\theta - \hat u^k }_V^2.
  \end{equation}
  Using the second consistency bound,
  \begin{equation}
    L_h(u^k_\theta) \geq L(u^k_\theta) - \varepsilon_k
    \geq L(\hat u^k) + \frac{\mu}{2} \Norm{ u^k_\theta - \hat u^k }_V^2 - \varepsilon_k.
  \end{equation}
  Comparing this with the previous upper bound on $L_h(u^k_\theta)$ yields
  \begin{equation}
    \delta_k + 2\varepsilon_k \geq \frac{\mu}{2} \Norm{ u^k_\theta - \hat u^k }_V^2,
  \end{equation}
  so that
  \begin{equation}
    \Norm{ u^k_\theta - \hat u^k }_V \leq \sqrt{ \frac{2}{\mu} (\delta_k + 2\varepsilon_k) } \leq C \sqrt{ \delta_k + \varepsilon_k },
  \end{equation}
  with $C := \sqrt{4/\mu}$.
\end{proof}

Building on this stability estimate for the inexact primal minimiser
$u^k_\theta$, we now examine the full neural Uzawa sequence
$(u^k_\theta, \tilde{\lambda}^k)$. In particular, we combine the
approximation, discretisation and optimisation errors with the known
convergence of the continuum scheme to derive convergence results for
the neural iterates in expectation.

\begin{lemma}[Neural approximation in the transport norm]
  \label{lem:NN_approx}
  Let $\mathcal{V}_N$ denote the class of neural networks with smooth
  activation $\sigma \in C^\infty$, finite depth and width and domain
  $\mathcal{W} = D \times \mathbb{S}^{d-1}$. Suppose $u^* \in V \cap
  C^\infty(\overline{\mathcal{W}})$. Then for every $\varepsilon > 0$
  there exists $u_\theta \in \mathcal{V}_N$ such that
  \begin{equation}
    \Norm{ u_\theta - u^* }_V < \varepsilon.
  \end{equation}
\end{lemma}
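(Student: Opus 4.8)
The plan is to reduce the $V$-norm bound to a simultaneous $C^1$ approximation and then invoke a universal approximation theorem valid for smooth activations. Writing $w := u_\theta - u^*$, the definition of the $V$-norm gives $\Norm{w}_V^2 = \Norm{(\mathcal{T}+\mathcal{S})w}_{\leb2(\mathcal{W})}^2 + \Norm{w}_{\leb2(\Gamma^-)}^2$, so it suffices to dominate each piece by $\Norm{w}_{C^1(\overline{\mathcal{W}})}$ and then make the latter as small as we like. Throughout I use that $\overline{\mathcal{W}}$ is compact, so it carries finite measure $|\mathcal{W}|$ and every continuous function on it attains a finite sup-norm.

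First I would control the interior residual. Expanding,
\[
  (\mathcal{T}+\mathcal{S})w = \vec\omega\cdot\nabla_{\vec x} w + (\sigma_A+\sigma_T)w - \frac{\sigma_T}{|\mathbb{S}^{d-1}|}\int_{\mathbb{S}^{d-1}} \pi(\vec\omega\cdot\vec\omega')\,w(\vec x,\vec\omega')\,\d\vec\omega'.
\]
Since $|\vec\omega|=1$, the transport term satisfies $\Norm{\vec\omega\cdot\nabla_{\vec x}w}_{\leb2(\mathcal{W})}\le |\mathcal{W}|^{1/2}\Norm{\nabla_{\vec x}w}_{L^\infty(\overline{\mathcal{W}})}$; the absorption/removal term is multiplication by a constant; and, because $\pi\in C^1([-1,1])$ is in particular bounded, the scattering average is a bounded operator on $L^\infty(\mathcal{W})$. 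Collecting these, $\Norm{(\mathcal{T}+\mathcal{S})w}_{\leb2(\mathcal{W})}\le C_1\Norm{w}_{C^1(\overline{\mathcal{W}})}$, with $C_1$ depending only on $\sigma_A$, $\sigma_T$, $\Norm{\pi}_{L^\infty}$ and $|\mathcal{W}|$.

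Next I would control the boundary trace and then perform the approximation. Both $u_\theta$ (a smooth network) and $u^*\in C^\infty(\overline{\mathcal{W}})$ have classical traces, so $w$ is continuous up to $\Gamma^-$; since the transport measure $|\vec\omega\cdot\vec n|\,\d\vec\omega\,\d\vec x$ is finite on $\Gamma^-$, one gets $\Norm{w}_{\leb2(\Gamma^-)}\le C_2\Norm{w}_{L^\infty(\overline{\mathcal{W}})}\le C_2\Norm{w}_{C^1(\overline{\mathcal{W}})}$. Combining with the previous step, $\Norm{w}_V\le C\Norm{w}_{C^1(\overline{\mathcal{W}})}$ with $C=(C_1^2+C_2^2)^{1/2}$. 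It then remains to choose $u_\theta\in\mathcal{V}_N$ with $\Norm{u_\theta-u^*}_{C^1(\overline{\mathcal{W}})}<\varepsilon/C$: extending $u^*$ to a compactly supported $C^\infty$ function on the ambient Euclidean space containing $\overline{\mathcal{W}}$ (e.g. by Whitney/Seeley extension), the density in $C^1$ on compact sets of feedforward networks with smooth non-polynomial activation (Hornik; Pinkus) supplies such a $u_\theta$, and the claim follows.

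The main obstacle is the approximation step: plain universal approximation in $C^0$ or $\leb2$ does not suffice, because the $V$-norm sees the spatial gradient through the transport term $\vec\omega\cdot\nabla_{\vec x}$, so I must invoke the stronger $C^1$ version, which is precisely where the assumed smoothness and non-polynomiality of the activation $\sigma$ are used. A secondary technical point is that $\mathcal{W}$ involves the sphere $\mathbb{S}^{d-1}$, a manifold rather than an open Euclidean set; this is handled by regarding $u_\theta$ and $u^*$ as functions of the ambient coordinates $(\vec x,\vec\omega)$ restricted to $\overline{\mathcal{W}}$ and noting that the ambient first derivatives dominate the genuine spatial derivative $\nabla_{\vec x}$ appearing in $\mathcal{T}$, so controlling the ambient $C^1$ norm is more than enough.
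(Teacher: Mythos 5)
Your argument is correct and follows the same two-step skeleton as the paper's sketch: bound the $V$-norm of $u_\theta - u^*$ by a stronger classical norm on $\overline{\mathcal{W}}$, then invoke density of neural networks in that stronger norm. The difference is the intermediate space and the density theorem. The paper routes through $\sobh1(\mathcal{W})$ (plus $L^\infty$), citing quantitative deep-network approximation results of Schwab--Zech and Yarotsky, and then simply asserts that $\mathcal{T}$, $\mathcal{S}$ and the inflow trace are continuous with respect to the $\sobh1$ topology -- the trace step there implicitly leans on a trace theorem for the weighted measure $|\vec\omega\cdot\vec n|\,\mathrm{d}\vec\omega\,\mathrm{d}\vec x$. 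You route through $C^1(\overline{\mathcal{W}})$ and use the classical qualitative density of networks with smooth non-polynomial activation (Hornik, Pinkus), after a Whitney/Seeley extension to handle the fact that $\mathbb{S}^{d-1}$ is not an open Euclidean set. Your version buys two things: the continuity of $\mathcal{T}+\mathcal{S}$ and of the trace becomes elementary (finite measure plus sup-norm bounds, no trace theorem required), and you make explicit the non-polynomiality hypothesis on $\sigma$ that the $C^1$ (or $\sobh1$) density statement genuinely needs but the lemma leaves implicit. What you give up is any quantitative content: the paper's cited results would in principle give width/depth bounds for a prescribed $\varepsilon$, whereas the Hornik--Pinkus theorem is purely existential; since the lemma as stated is qualitative, both arguments suffice.
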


\begin{proof}[Sketch of proof]
  Since $u^* \in C^\infty(\overline{\mathcal{W}})$, it is smooth in
  both variables with bounded spatial derivatives. Neural networks
  with smooth activation functions (e.g., $\tanh$, GELU) approximate
  such functions arbitrarily well in $\sobh1(\mathcal{W})$ and
  $L^\infty$ norms, provided the depth and width are sufficiently
  large. In particular, approximation results from
  \cite{SchwabZech2019} and \cite{Yarotsky2018} yield $u_\theta \in
  \mathcal{V}_N$ such that
  \begin{equation}
    \Norm{ u_\theta - u^* }_{\sobh1(\mathcal{W})} < \varepsilon.
  \end{equation}
  The $V$-norm depends on $\mathcal{T} u$, $\mathcal{S} u$ and the
  trace $u|_{\Gamma^-}$, all of which are continuous with respect to
  the $\sobh1$ topology when $u$ is smooth. Hence $\Norm{ u_\theta -
    u^* }_V \to 0$ as $\Norm{ u_\theta - u^* }_{\sobh1} \to 0$.
\end{proof}

\begin{lemma}[Monte Carlo consistency of quadrature]
  \label{lem:MC_consistency}
  Let $L(u, \lambda)$ and $L_h(u, \lambda)$ denote the continuous and
  Monte Carlo-discretised Lagrangians, respectively. Assume $u \in V
  \cap C^\infty(\overline{\mathcal{W}})$ and that $\lambda \in \trV^*$
  is represented by a smooth function on $\Gamma^-$. Let $\{(\vec y_i,
  \vec s_i)\}_{i=1}^{N} \subset \mathcal{W}$ be i.i.d.\ samples from
  the uniform measure on $\mathcal{W}$ and let $\{ \vec b_j \}_{j=1}^M
  \subset \Gamma^-$ be i.i.d.\ samples from the inflow-weighted
  measure $|\vec s \cdot \vec n(\vec y)|\, \d\vec y\, \d\vec
  s$. Define
  \begin{equation}
    \begin{aligned}
      L_h(u, \lambda)
      :=
      &~\frac{1}{2N} \sum_{i=1}^N \qp{(\mathcal{T} + \mathcal{S})u(\vec y_i, \vec s_i)}^2 \\
      &+ \frac{\gamma}{2M} \sum_{j=1}^M \qp{ u(\vec b_j) - g(\vec b_j) }^2
      - \frac{1}{M} \sum_{j=1}^M \lambda(\vec b_j) \qp{ u(\vec b_j) - g(\vec b_j) }.
    \end{aligned}
  \end{equation}
  Then, for any $C > 0$ there exists $K > 0$ such that for all $u$
  with $\Norm{u}_V \leq C$,
\begin{equation}
  \mathbb{E} \qp{ \left| L(u, \lambda) - L_h(u, \lambda) \right| } \leq \frac{K}{\sqrt{N}} + \frac{K}{\sqrt{M}}.
\end{equation}
\end{lemma}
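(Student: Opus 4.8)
The plan is to exploit that $L$ and $L_h$ differ only in that each of the three integrals defining $L$ is replaced by an empirical average over i.i.d.\ samples. Writing $L = L^{(1)} + L^{(2)} + L^{(3)}$ and $L_h = L_h^{(1)} + L_h^{(2)} + L_h^{(3)}$ for the interior residual, the penalised boundary mismatch and the multiplier pairing, the triangle inequality reduces the claim to three separate Monte Carlo estimates,
\begin{equation}
  \mathbb{E}\qp{ \left| L(u,\lambda) - L_h(u,\lambda) \right| }
  \leq \sum_{i=1}^3 \mathbb{E}\qp{ \left| L^{(i)} - L_h^{(i)} \right| }.
\end{equation}
Under the stated sampling — uniform on $\mathcal{W}$ for the $N$ interior points and inflow-weighted on $\Gamma^-$ for the $M$ boundary points — each empirical average is an unbiased estimator of its integral, the fixed volume and inflow-mass normalisations being absorbed into the weights, so that $\mathbb{E}[L_h^{(i)}] = L^{(i)}$ for each $i$.

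The single tool I would use is the elementary Monte Carlo bound: if $\bar{X}_n = \tfrac1n\sum_{\ell=1}^n X_\ell$ averages $n$ i.i.d.\ copies of a random variable $X$ with finite second moment, then Jensen's inequality together with independence gives
\begin{equation}
  \mathbb{E}\qp{ \left| \bar{X}_n - \mathbb{E}[X] \right| }
  \leq \sqrt{ \operatorname{Var}(\bar{X}_n) }
  = \frac{ \sqrt{ \operatorname{Var}(X) } }{ \sqrt{n} }
  \leq \frac{ \sqrt{ \mathbb{E}[X^2] } }{ \sqrt{n} }.
\end{equation}
Applying this term by term: for $L^{(1)}$ the summand is $X = \tfrac12\,((\mathcal{T}+\mathcal{S})u)^2$ sampled on $\mathcal{W}$, with $n = N$ and $\mathbb{E}[X^2]$ governed by the fourth moment of $(\mathcal{T}+\mathcal{S})u$; for $L^{(2)}$ it is $\tfrac{\gamma}{2}\,(u-g)^2$ sampled on $\Gamma^-$, with $n = M$ and the fourth moment of the trace mismatch; and for $L^{(3)}$ it is $\lambda\,(u-g)$, again with $n = M$, where $\mathbb{E}[X^2]$ needs only the second moment of the product. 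Gathering the interior contribution into a factor $K_1/\sqrt{N}$ and the two boundary contributions into $(K_2+K_3)/\sqrt{M}$, and setting $K := \max\{K_1,\,K_2+K_3\}$, produces the asserted bound.

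The crux — and the main obstacle — is that these second-moment bounds are \emph{not} controlled by $\Norm{u}_V \leq C$ on its own: the $V$-norm supplies only $\leb2$ control of $(\mathcal{T}+\mathcal{S})u$ and of the trace $u-g$, whereas the variance of a squared integrand involves their $\leb4$ norms. This is precisely where the regularity hypothesis $u \in C^\infty(\overline{\mathcal{W}})$ is needed. Since $\overline{\mathcal{W}} = \overline{D}\times\mathbb{S}^{d-1}$ is compact and $u$ is smooth, the residual $(\mathcal{T}+\mathcal{S})u$ is bounded in $\leb\infty$ — note that $\mathcal{S}u$ preserves smoothness, being the integral of the $C^1$ kernel $\pi$ against the smooth $u$ — as are the trace of $u-g$ and the smooth multiplier $\lambda$. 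Interpolating then gives
\begin{equation}
  \int_{\mathcal{W}} \left| (\mathcal{T}+\mathcal{S})u \right|^4
  \leq \Norm{ (\mathcal{T}+\mathcal{S})u }_{\leb\infty(\mathcal{W})}^2 \, \Norm{ (\mathcal{T}+\mathcal{S})u }_{\leb2(\mathcal{W})}^2,
\end{equation}
and analogously on $\Gamma^-$, so each required moment is a fixed $\leb\infty$ factor times a $V$-controlled $\leb2$ factor. Strictly, then, the constant $K$ depends not only on $C$ but also on a uniform $\leb\infty$ bound over the admissible functions — automatic, for instance, on a network class with uniformly bounded parameters — and I would record that dependence rather than assert uniformity over the entire ball $\Norm{u}_V \leq C$. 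With this proviso the three $O(n^{-1/2})$ estimates combine to the stated rate.
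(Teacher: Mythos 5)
Your proof is correct and follows essentially the same route as the paper's: split the Lagrangian into its three integrals, bound each Monte Carlo error by the standard $O(n^{-1/2})$ rate for bounded i.i.d.\ integrands, and combine with the triangle inequality. Your closing caveat is well taken and is in fact sharper than the paper's own argument, which asserts that the integrand bounds depend ``only on $C$, $g$ and $\lambda$'' even though $\Norm{u}_V \leq C$ gives only $\leb{2}$ control of the residual and trace and does not bound $\Norm{(\mathcal{T}+\mathcal{S})u}_{\leb{\infty}(\overline{\mathcal{W}})}$ uniformly over the ball, so the constant $K$ genuinely requires the additional uniform sup-norm hypothesis you record.
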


\begin{proof}
  Since $u$ and $\lambda$ are smooth and $\Norm{u}_V \leq C$, all
  terms in the integrand of $L(u, \lambda)$ are uniformly
  bounded. Define
  \begin{equation}
    f(\vec y, \vec s) := \qp{ (\mathcal{T} + \mathcal{S}) u(\vec y, \vec s) }^2, \qquad
    r(\vec y, \vec s) := \qp{ u(\vec y, \vec s) - g(\vec y, \vec s) }^2, \qquad
    m(\vec y, \vec s) := \lambda(\vec y, \vec s) \qp{ u(\vec y, \vec s) - g(\vec y, \vec s) }.
  \end{equation}
  All three functions are bounded and continuous on
  $\overline{\mathcal{W}}$, with bounds depending only on $C$, $g$
  and $\lambda$.

  By standard Monte Carlo convergence for bounded integrands with
  i.i.d. samples, we have
  \begin{equation}
    \mathbb{E} \qp{ \left| \int_{\mathcal{W}} f\, \d\vec y\, \d\vec s - \frac{1}{N} \sum_{i=1}^N f(\vec y_i, \vec s_i) \right| } \leq \frac{K_f}{\sqrt{N}},
  \end{equation}
  \begin{equation}
    \mathbb{E} \qp{ \left| \int_{\Gamma^-} r\, |\vec s \cdot \vec n(\vec y)|\, \d\vec y\, \d\vec s - \frac{1}{M} \sum_{j=1}^M r(\vec b_j) \right| } \leq \frac{K_r}{\sqrt{M}},
  \end{equation}
  \begin{equation}
    \mathbb{E} \qp{ \left| \int_{\Gamma^-} m\, |\vec s \cdot \vec n(\vec y)|\, \d\vec y\, \d\vec s - \frac{1}{M} \sum_{j=1}^M m(\vec b_j) \right| } \leq \frac{K_m}{\sqrt{M}}.
  \end{equation}
  Define $K := \max \{ K_f, K_r, K_m \}$. Applying the triangle
  inequality to the three terms in $L$ yields
  \begin{equation}
    \mathbb{E} \qp{ \left| L(u, \lambda) - L_h(u, \lambda) \right| } \leq \frac{K}{\sqrt{N}} + \frac{K}{\sqrt{M}}. \qedhere
  \end{equation}
\end{proof}

\begin{lemma}[Suboptimality decay under gradient descent]
\label{lem:delta_decay}
Let $\tilde{\lambda}^k \in \leb2(\Gamma^-)$ be fixed and let $\xi_k$
denote a fixed Monte Carlo sample. Suppose the map
\begin{equation}
  \theta \mapsto L_h(u_\theta, \tilde{\lambda}^k; \xi_k)
\end{equation}
is differentiable with $L$-Lipschitz gradient in parameter space
\begin{equation}
  \Norm{ \nabla_\theta L_h(u_{\theta_1}, \tilde{\lambda}^k; \xi_k) - \nabla_\theta L_h(u_{\theta_2}, \tilde{\lambda}^k; \xi_k) }
  \leq L \Norm{ \theta_1 - \theta_2 }
  \qquad \text{for all } \theta_1, \theta_2 \in \mathbb{R}^P.
\end{equation}
Let $\theta^{k,0}$ be an initial guess and define a gradient descent
sequence by
\begin{equation}
  \theta^{k,m+1} := \theta^{k,m} - \eta \nabla_\theta L_h(u_{\theta^{k,m}}, \tilde{\lambda}^k; \xi_k), \qquad m = 0, \dots, T_k - 1,
\end{equation}
with step size $0 < \eta \leq \frac{1}{L}$. Let $u^k_\theta :=
u_{\theta^{k,T_k}}$ be the final neural iterate. Then
\begin{equation}
L_h(u^k_\theta, \tilde{\lambda}^k; \xi_k) - \inf_{\theta \in \mathbb{R}^P} L_h(u_\theta, \tilde{\lambda}^k; \xi_k)
\leq \frac{1}{2\eta T_k} \Norm{ \theta^{k,0} - \theta^* }^2,
\end{equation}
where $\theta^* \in \arg\min_{\theta} L_h(u_\theta, \tilde{\lambda}^k;
\xi_k)$. In particular, there exists a constant $C > 0$ such that
\begin{equation}
  \delta^k := \mathbb{E}_{\xi_k} \qp{ L_h(u^k_\theta, \tilde{\lambda}^k; \xi_k) - \inf_{\theta} L_h(u_\theta, \tilde{\lambda}^k; \xi_k) }
\leq \frac{C}{T_k}.
\end{equation}
\end{lemma}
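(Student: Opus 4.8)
The plan is to recognise this as the classical $O(1/T)$ convergence rate for gradient descent on a smooth convex objective and to verify that the stated hypotheses supply exactly the ingredients required. Write $F(\theta) := L_h(u_\theta, \tilde{\lambda}^k; \xi_k)$ for the fixed-sample objective, which by assumption has $L$-Lipschitz gradient, and abbreviate $\theta^m := \theta^{k,m}$. The first step is the standard descent lemma: $L$-smoothness gives $F(\theta^{m+1}) \leq F(\theta^m) + \langle \nabla F(\theta^m), \theta^{m+1} - \theta^m \rangle + \tfrac{L}{2}\Norm{\theta^{m+1} - \theta^m}^2$, and substituting the update $\theta^{m+1} - \theta^m = -\eta \nabla F(\theta^m)$ together with $\eta \leq 1/L$ yields $F(\theta^{m+1}) \leq F(\theta^m) - \tfrac{\eta}{2}\Norm{\nabla F(\theta^m)}^2$. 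In particular the objective values are nonincreasing along the iteration.

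The second step couples this to the distance to a minimiser $\theta^*$. Expanding $\Norm{\theta^{m+1} - \theta^*}^2 = \Norm{\theta^m - \theta^*}^2 - 2\eta\langle \nabla F(\theta^m), \theta^m - \theta^* \rangle + \eta^2 \Norm{\nabla F(\theta^m)}^2$, invoking convexity in the form $\langle \nabla F(\theta^m), \theta^m - \theta^* \rangle \geq F(\theta^m) - F(\theta^*)$, and then using the descent-lemma bound $\eta^2 \Norm{\nabla F(\theta^m)}^2 \leq 2\eta(F(\theta^m) - F(\theta^{m+1}))$ to control the last term, the terms involving $F(\theta^m)$ cancel and one is left with the telescoping-ready inequality $2\eta(F(\theta^{m+1}) - F(\theta^*)) \leq \Norm{\theta^m - \theta^*}^2 - \Norm{\theta^{m+1} - \theta^*}^2$.

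Summing over $m = 0, \dots, T_k - 1$ collapses the right-hand side to $\Norm{\theta^{k,0} - \theta^*}^2 - \Norm{\theta^{k,T_k} - \theta^*}^2 \leq \Norm{\theta^{k,0} - \theta^*}^2$. Using the monotonicity of $F(\theta^m)$ from the descent lemma, the terminal gap $F(\theta^{k,T_k}) - F(\theta^*)$ is the smallest summand, so $T_k(F(\theta^{k,T_k}) - F(\theta^*)) \leq \sum_{m} (F(\theta^{m+1}) - F(\theta^*))$, and dividing by $2\eta T_k$ gives the stated bound $F(\theta^{k,T_k}) - \inf_\theta F \leq \tfrac{1}{2\eta T_k}\Norm{\theta^{k,0} - \theta^*}^2$. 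For the expectation estimate I then take $\mathbb{E}_{\xi_k}$ of both sides; provided the initialisation distance $\Norm{\theta^{k,0} - \theta^*}$ is bounded in mean square by some $R$ independent of the sample, this yields $\delta^k \leq R^2/(2\eta T_k) =: C/T_k$.

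The main obstacle, and the step requiring the most care, is the convexity used in the second step. While the continuum Lagrangian $L(\cdot, \lambda)$ is strongly convex in the function argument $u$, the neural parametrisation $\theta \mapsto u_\theta$ is nonlinear, so $F$ need not be convex in $\theta$ and the classical telescoping argument does not apply verbatim. To close this gap one must either restrict to a regime in which the reduced objective is effectively convex near $\theta^*$ (for instance the overparametrised, lazy-training regime), or replace convexity by a Polyak--{\L}ojasiewicz condition $\tfrac{1}{2}\Norm{\nabla F(\theta)}^2 \geq \mu_{\mathrm{PL}}(F(\theta) - \inf F)$; combined with the descent lemma the latter produces geometric decay of the gap, which is at least as strong as the advertised $O(1/T_k)$ rate and suffices for the subsequent analysis. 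I would therefore state the convexity (or Polyak--{\L}ojasiewicz) property explicitly as the operative assumption and carry the classical argument through under it.
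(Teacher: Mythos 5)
Your proof takes essentially the same route as the paper: the paper's own argument simply cites the classical $O(1/T_k)$ rate for gradient descent on a smooth convex objective (Nesterov, Theorem~2.1.15) and then takes expectations, defining $C := \frac{1}{2\eta}\sup_k \mathbb{E}_{\xi_k}\Norm{\theta^{k,0}-\theta^*}^2$ under a uniform boundedness assumption on initial guesses and minimisers; your argument is the standard proof of that cited theorem written out in full (descent lemma, convexity coupling, telescoping, monotonicity), followed by the same expectation step. The caveat you raise is the substantive point: convexity of $\theta \mapsto L_h(u_\theta, \tilde{\lambda}^k; \xi_k)$ appears nowhere among the lemma's hypotheses, yet both your telescoping step and the paper's citation require it (the paper says the bound ``follows from convexity and smoothness''), and it generically fails for nonlinear neural parametrisations even though $L(\cdot,\lambda)$ is convex in the function argument $u$. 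Your proposal to make convexity, or a Polyak--{\L}ojasiewicz condition, an explicit hypothesis is exactly the right repair; the paper leaves this gap implicit, so your write-up is if anything more honest than the original.
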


\begin{proof}
  Define $F(\theta) := L_h(u_\theta, \tilde{\lambda}^k; \xi_k)$ and
  let $\theta^* \in \arg\min_\theta F(\theta)$. Since $F$ has an
  $L$-Lipschitz gradient, the classical convergence result for
  gradient descent with step size $\eta \leq 1/L$ gives
\begin{equation}
F(\theta^{k,T_k}) - F(\theta^*) \leq \frac{1}{2\eta T_k} \Norm{ \theta^{k,0} - \theta^* }^2.
\end{equation}
This bound follows from convexity and smoothness; see, for example,
\cite[Theorem 2.1.15]{nesterov2013convex}.

Taking expectation over $\xi_k$ yields
\begin{equation}
\delta^k = \mathbb{E}_{\xi_k} \qp{ F(\theta^{k,T_k}) - F(\theta^*) }
\leq \frac{1}{2\eta T_k} \mathbb{E}_{\xi_k} \Norm{ \theta^{k,0} - \theta^* }^2.
\end{equation}
Define
\begin{equation}
C := \frac{1}{2\eta} \sup_k \mathbb{E}_{\xi_k} \Norm{ \theta^{k,0} - \theta^* }^2,
\end{equation}
which is finite provided the initial guess $\theta^{k,0}$ and the
minimisers $\theta^*$ are uniformly bounded in $k$. This gives the
result.
\end{proof}

\begin{lemma}[Stability of inexact Monte Carlo Uzawa]
\label{lem:stabilityMCUz}
Let $\{u^k_\theta\}_{k \in \mathbb{N}}$ and $\{\tilde{\lambda}^k\}_{k
  \in \mathbb{N}}$ denote the iterates of the inexact Uzawa scheme,
where $u^k_\theta$ is obtained by approximate minimisation of the
Monte Carlo-discretised Lagrangian $L_h(u, \tilde{\lambda}^k;
\xi_k)$. Suppose that
\begin{equation}
  \mathbb{E}_{\xi_k} \qp{ L_h(u^k_\theta, \tilde{\lambda}^k; \xi_k) }
  \leq \inf_{u \in V} \mathbb{E}_{\xi_k} \qp{ L_h(u, \tilde{\lambda}^k; \xi_k) } + \varepsilon^k,
\end{equation}
with $\sum_k \varepsilon^k < \infty$. Assume further that $L_h(\cdot,
\lambda; \xi)$ is coercive in $u$, uniformly in $\lambda$, and that
$\tilde{\lambda}^k \to \lambda^\dagger$ in $\trV^*$. Then,
\begin{equation}
  \mathbb{E}_{\xi_k} \qp{ \Norm{ u^k_\theta - u^* }_V^2 } \to 0 \qquad \text{as } k \to \infty,
\end{equation}
where $u^*$ denotes the solution of the exact constrained problem.
\end{lemma}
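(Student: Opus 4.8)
The plan is to reduce convergence of the full inexact scheme to two ingredients already available: a per-iteration stability estimate in the spirit of Theorem~\ref{thm:stability_mc}, and Lipschitz dependence of the exact primal solve on the dual variable. I introduce the exact solution map $u_\lambda := \arg\min_{u \in V} L(u,\lambda)$, characterised by the Euler--Lagrange identity established in the proof of Lemma~\ref{lemma:1}, so that $u^* = u_{\lambda^*}$. I will identify the limiting dual so that $u_{\lambda^\dagger} = u^*$ and then split
\begin{equation}
  \Norm{u^k_\theta - u^*}_V \leq \Norm{u^k_\theta - u_{\tilde\lambda^k}}_V + \Norm{u_{\tilde\lambda^k} - u_{\lambda^\dagger}}_V,
\end{equation}
bounding the first (random) term in expectation over $\xi_k$ and the second deterministically.

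First I would show that $\lambda \mapsto u_\lambda$ is Lipschitz from $\trV^*$ into $(V, \Norm{\cdot}_V)$. Subtracting the Euler--Lagrange conditions for two duals $\lambda_1, \lambda_2$ and testing with $\phi = u_{\lambda_1} - u_{\lambda_2}$ yields
\begin{equation}
  \Norm{(\mathcal{T}+\mathcal{S})(u_{\lambda_1}-u_{\lambda_2})}^2_{\leb2(\mathcal{W})} + \gamma\Norm{u_{\lambda_1}-u_{\lambda_2}}^2_{\leb2(\Gamma^-)} = \ltwop{\lambda_1-\lambda_2}{u_{\lambda_1}-u_{\lambda_2}}_{\leb2(\Gamma^-)}.
\end{equation}
The left-hand side dominates $\min\{1,\gamma\}\Norm{u_{\lambda_1}-u_{\lambda_2}}_V^2$, while the right-hand side is at most $\Norm{\lambda_1-\lambda_2}_{\trV^*}\Norm{u_{\lambda_1}-u_{\lambda_2}}_{\leb2(\Gamma^-)}$, and the inflow trace norm is itself a component of the $V$-norm. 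Dividing gives $\Norm{u_{\lambda_1}-u_{\lambda_2}}_V \leq C_{\mathrm{Lip}}\Norm{\lambda_1-\lambda_2}_{\trV^*}$, so the second term satisfies $\Norm{u_{\tilde\lambda^k}-u_{\lambda^\dagger}}_V \leq C_{\mathrm{Lip}}\Norm{\tilde\lambda^k - \lambda^\dagger}_{\trV^*} \to 0$ by the assumed dual convergence.

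For the first term I would rerun the argument of Theorem~\ref{thm:stability_mc} with $\tilde\lambda^k$ held fixed. Writing $\bar L_k := \mathbb{E}_{\xi_k}[L_h(\cdot,\tilde\lambda^k;\xi_k)]$, unbiasedness of the Monte Carlo estimator (as in Lemma~\ref{lem:MC_consistency}) gives $\bar L_k = L(\cdot,\tilde\lambda^k)$, which the uniform coercivity hypothesis makes $\mu$-strongly convex with minimiser $u_{\tilde\lambda^k}$. A uniform (sup-over-$u$) form of Lemma~\ref{lem:MC_consistency} lets me pass between $\mathbb{E}_{\xi_k}[L_h(u^k_\theta;\xi_k)]$ and $\mathbb{E}_{\xi_k}[\bar L_k(u^k_\theta)]$ at the cost of a consistency error $\eta_k \to 0$, thereby absorbing the correlation between the random iterate and its own sample. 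Combining with the inexact-minimality hypothesis and strong convexity, exactly as in Theorem~\ref{thm:stability_mc}, produces
\begin{equation}
  \frac{\mu}{2}\,\mathbb{E}_{\xi_k}[\Norm{u^k_\theta - u_{\tilde\lambda^k}}_V^2] \leq \varepsilon^k + \eta_k,
\end{equation}
whose right-hand side tends to zero since $\varepsilon^k \to 0$ (a consequence of $\sum_k \varepsilon^k < \infty$). Squaring the triangle inequality with $\Norm{a+b}^2 \leq 2\Norm{a}^2 + 2\Norm{b}^2$ and inserting both bounds then gives $\mathbb{E}_{\xi_k}[\Norm{u^k_\theta - u^*}_V^2] \to 0$.

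The hard part will be the identification $u_{\lambda^\dagger} = u^*$: the hypotheses supply only $\tilde\lambda^k \to \lambda^\dagger$, and I must argue that this limit is a genuine saddle-point multiplier, i.e. that the constraint residual $u_{\lambda^\dagger} - g$ vanishes on $\Gamma^-$. I would extract this from the dual update $\tilde\lambda^{k+1} = \tilde\lambda^k - \rho(u^k_\theta - g)$: convergence of $\tilde\lambda^k$ forces $u^k_\theta - g \to 0$ in $\leb2(\Gamma^-)$, which together with the first-term estimate pins $u_{\lambda^\dagger}$ to the unique feasible minimiser $u^*$. A secondary technical point, also needed to make the expectation manipulations rigorous, is the correlation between $u^k_\theta$ and $\xi_k$; this is precisely why a uniform rather than pointwise consistency bound is required, supported by the uniform-in-$k$ boundedness of the iterates furnished by coercivity.
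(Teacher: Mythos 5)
Your proposal is correct and follows essentially the same route as the paper: split the error through the exact minimiser at $\tilde\lambda^k$, control the optimisation term via coercivity/strong convexity of the (expected) discretised Lagrangian, use continuity of the map $\lambda \mapsto \arg\min_u L(u,\lambda)$ for the remaining term, and identify the limit with $u^*$ by forcing the boundary residual to vanish. If anything you are more careful than the paper in two places it glosses over: the quantitative Lipschitz bound on the solution map obtained by subtracting Euler--Lagrange identities, and the correlation between $u^k_\theta$ and its own sample $\xi_k$, which you handle with a uniform consistency bound rather than implicitly conflating in-sample and out-of-sample expectations.
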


\begin{proof}
  Define $u^k := \arg\min_{u \in V} \mathbb{E}_{\xi_k} \qb{L_h(u,
    \tilde{\lambda}^k; \xi_k)}$. By uniform coercivity, there exists
  $\alpha > 0$ such that for all $u \in V$,
\begin{equation}
  \mathbb{E}_{\xi_k} \qb{L_h(u, \tilde{\lambda}^k; \xi_k)}
  \geq
  \mathbb{E}_{\xi_k} \qb{L_h(u^k, \tilde{\lambda}^k; \xi_k)}
  +
  \frac{\alpha}{2} \Norm{ u - u^k }_V^2.
\end{equation}
Applying this with $u = u^k_\theta$ and using the suboptimality bound yields
\begin{equation}
  \frac{\alpha}{2} \mathbb{E}_{\xi_k} \qb{\Norm{ u^k_\theta - u^k }_V^2}
  \leq \varepsilon^k,
\end{equation}
so
\begin{equation}
  \mathbb{E}_{\xi_k} \qb{\Norm{ u^k_\theta - u^k }_V^2}
  \leq \frac{2}{\alpha} \varepsilon^k.
\end{equation}
Next, split the total error:
\begin{equation}
  \mathbb{E}_{\xi_k} \qb{\Norm{ u^k_\theta - u^* }_V^2}
  \leq
  2 \mathbb{E}_{\xi_k} \qb{\Norm{ u^k_\theta - u^k }_V^2}
  +
  2\, \Norm{ u^k - u^* }_V^2.
\end{equation}
The first term tends to zero by summability of $\varepsilon^k$. To
show the second term vanishes, we define the minimiser map
\begin{equation}
  u(\lambda) := \arg\min_{u \in V} \mathbb{E}_{\xi_k} \qb{L_h(u, \lambda; \xi_k)}.
\end{equation}
Then $u^k = u(\tilde{\lambda}^k)$ by definition. Under the stated
assumptions, this map is continuous in the $V$-norm. Since
$\tilde{\lambda}^k \to \lambda^\dagger$, it follows that $u^k \to
u(\lambda^\dagger)$.

Finally, to identify $u(\lambda^\dagger)$ with $u^*$, we observe that
$u^k \to u(\lambda^\dagger)$ in $V$ and $u^k \to u^*$ in
$L^2(\Gamma_-)$ by construction of the Uzawa scheme. Since $u^*$ is
the unique solution of the constrained problem, and both $u^*$ and
$u(\lambda^\dagger)$ solve the same PDE in the limit and satisfy $u =
g$ on $\Gamma_-$, it follows that $u(\lambda^\dagger) = u^*$. Hence
$u^k \to u^*$ and the result follows.
\end{proof}

\begin{remark}[Coercivity and continuity of the Lagrangian map]
  The coercivity of the expected Monte Carlo Lagrangian $\mathbb{E}
  \qp{ L_h(\cdot, \lambda) }$ follows directly from the coercivity of
  the continuous Lagrangian $L(u, \lambda)$ on $V$. If the integrands
  in $L(u, \lambda)$ are bounded and continuous, then the Monte Carlo
  samples yield a uniformly integrable approximation and the
  expectation inherits the same lower bound
\begin{equation}
  \mathbb{E} \qp{ L_h(u, \lambda) } \geq \alpha \Norm{u}_V^2 - C.
\end{equation}
Moreover, the mapping $\lambda \mapsto L_h(u, \lambda)$ is affine in
$\lambda$ for fixed $u$ and hence smooth in the sense of Fr\'echet
differentiability. This structure passes to the expected functional,
so $\lambda \mapsto \mathbb{E} \qp{ L_h(u, \lambda) }$ is Fr\'echet
differentiable for fixed $u$.

As a result, the mapping $\lambda \mapsto \arg\min_u \mathbb{E} \qp{
  L_h(u, \lambda) }$ is continuous under standard conditions from
convex analysis in Hilbert spaces, namely coercivity and Fr\'echet
differentiability of the objective. These properties ensure that the
exact minimisers $u^k$ remain stable as $\tilde{\lambda}^k \to
\lambda^\dagger$ and that convergence $u^k \to u^*$ holds in the strong
$V$-norm.
\end{remark}

\begin{theorem}[Convergence in expectation under Monte Carlo quadrature]
\label{the:convergence}
Let $u^* \in V$ denote the solution to the kinetic boundary value
problem. Let $\{ u^k_\theta \}_{k \in \mathbb{N}}$ and $\{
\tilde{\lambda}^k \}_{k \in \mathbb{N}}$ be the iterates of the Monte
Carlo Uzawa scheme with neural network surrogates. Suppose that at
each iteration $k$, the Lagrangian $L(u, \tilde{\lambda}^k)$ is
approximated by a Monte Carlo estimator $L_h(u, \tilde{\lambda}^k;
\xi_k)$ computed over a mini-batch $\xi_k$ of size $M_k$, with
i.i.d.\ samples from a fixed quadrature distribution on $\mathcal{W}$
and $\Gamma^-$.

Assume all functions are smooth, the integrands in $L$ are uniformly
bounded and Lipschitz continuous in $u$, and that each $u^k_\theta$
satisfies
\begin{equation}
\mathbb{E}_{\xi_k} \qp{ L_h(u^k_\theta, \tilde{\lambda}^k; \xi_k) }
\leq \inf_{u \in \mathcal{V}_N} L(u, \tilde{\lambda}^k) + \varepsilon^k + \eta^k,
\end{equation}
where $\sum_k \varepsilon^k < \infty$ and $\eta^k \leq C/\sqrt{M_k}$
for some constant $C > 0$. Assume $M_k \to \infty$ and that
$\mathcal{V}_N$ is dense in $V$. Then
\begin{equation}
\mathbb{E}_{\xi_k} \qp{ \Norm{ u^k_\theta - u^* }_V^2 } \to 0 \qquad \text{as } k \to \infty.
\end{equation}
\end{theorem}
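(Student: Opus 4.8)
The plan is to reduce the statement to the stability estimate of Lemma~\ref{lem:stabilityMCUz} and then close with the continuum convergence of Theorem~\ref{thm:uzawa-convergence}. I introduce the exact minimiser map $u(\lambda) := \arg\min_{u \in V} L(u, \lambda)$ and split by the triangle inequality,
\begin{equation}
  \mathbb{E}_{\xi_k}\qp{ \Norm{ u^k_\theta - u^* }_V^2 }
  \leq 2\,\mathbb{E}_{\xi_k}\qp{ \Norm{ u^k_\theta - u(\tilde\lambda^k) }_V^2 }
  + 2\,\Norm{ u(\tilde\lambda^k) - u^* }_V^2 ,
\end{equation}
treating the two contributions as a primal (approximation/optimisation/quadrature) error and a dual (Uzawa) error, respectively. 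Here $\tilde\lambda^k$ is held fixed under $\mathbb{E}_{\xi_k}$, in the convention of the preceding lemmas.

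For the primal term I exploit that the Monte Carlo estimator is unbiased, so $\mathbb{E}_{\xi_k}\qp{ L_h(u, \tilde\lambda^k; \xi_k) } = L(u, \tilde\lambda^k)$ for each fixed $u$; combined with density of $\mathcal{V}_N$ (Lemma~\ref{lem:NN_approx}) and continuity of $L$ in the $V$-norm this gives $\inf_{\mathcal{V}_N} L(\cdot, \tilde\lambda^k) = \inf_V L(\cdot, \tilde\lambda^k) = L(u(\tilde\lambda^k), \tilde\lambda^k)$, provided the exact minimiser is smooth enough for Lemma~\ref{lem:NN_approx} to apply. The hypothesis thus says $u^k_\theta$ is an $(\varepsilon^k + \eta^k)$-minimiser of the expected discrete objective. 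Using the $\mu$-strong convexity of $L(\cdot, \tilde\lambda^k)$ at its minimiser and taking expectations,
\begin{equation}
  \frac{\mu}{2}\,\mathbb{E}_{\xi_k}\qp{ \Norm{ u^k_\theta - u(\tilde\lambda^k) }_V^2 }
  \leq \mathbb{E}_{\xi_k}\qp{ L(u^k_\theta, \tilde\lambda^k) } - L(u(\tilde\lambda^k), \tilde\lambda^k),
\end{equation}
and I would control the right-hand side by inserting $\pm L_h(u^k_\theta, \tilde\lambda^k; \xi_k)$ and bounding the mismatch by a uniform version of the Monte Carlo consistency of Lemma~\ref{lem:MC_consistency} over the bounded neural class. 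This yields
\begin{equation}
  \mathbb{E}_{\xi_k}\qp{ \Norm{ u^k_\theta - u(\tilde\lambda^k) }_V^2 }
  \leq \frac{2}{\mu}\qp{ \varepsilon^k + \eta^k + \frac{C}{\sqrt{M_k}} },
\end{equation}
which vanishes since $\varepsilon^k \to 0$, $\eta^k \leq C/\sqrt{M_k}$ and $M_k \to \infty$. Crucially this step needs only decay to zero, not summability.

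For the dual term I would show $\tilde\lambda^k \to \lambda^\dagger$ in $\trV^*$ and identify $u(\lambda^\dagger) = u^*$, after which continuity of the minimiser map $\lambda \mapsto u(\lambda)$ (from the coercivity/continuity remark) gives $\Norm{ u(\tilde\lambda^k) - u^* }_V \to 0$. To obtain the dual convergence I mimic the continuum argument: expand $\Norm{ \tilde\lambda^{k+1} - \lambda^* }^2$ from the perturbed update $\tilde\lambda^{k+1} = \tilde\lambda^k - \rho(u^k_\theta - g)$ as in Lemma~\ref{lemma:2}, and split $u^k_\theta - u^* = (u(\tilde\lambda^k) - u^*) + (u^k_\theta - u(\tilde\lambda^k))$. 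The exact part reproduces the nonnegative residual identity of Lemma~\ref{lemma:1} (now with $u(\tilde\lambda^k)$ and $\tilde\lambda^k$), while the primal error enters as a perturbation controlled by the bound just derived; a quasi-Fej\'er / Robbins--Siegmund argument then forces the multiplier distance to converge and the residual terms to be summable.

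The hard part will be precisely this dual step. The perturbation entering the multiplier recursion satisfies only $\mathbb{E}_{\xi_k}\qp{ \Norm{ u^k_\theta - u(\tilde\lambda^k) }_{\leb2(\Gamma^-)}^2 } \leq C(\varepsilon^k + \eta^k)$, and since $\eta^k \leq C/\sqrt{M_k}$ need not be summable under the bare assumption $M_k \to \infty$, the quasi-Fej\'er estimate does not close without either strengthening the batch growth (so that $\sum_k 1/\sqrt{M_k} < \infty$) or importing $\tilde\lambda^k \to \lambda^\dagger$ as the standing hypothesis of Lemma~\ref{lem:stabilityMCUz}. I would therefore adopt the latter route—invoking Lemma~\ref{lem:stabilityMCUz} directly once the primal reduction is in place—or supplement the growth condition on $M_k$ for a self-contained argument. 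The remaining bookkeeping, namely uniformity of the consistency constant over the iterates and boundedness of $\Norm{ u^k_\theta }_V$, is routine given the coercivity of the expected Lagrangian.
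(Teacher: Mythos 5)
Your proposal follows essentially the same route as the paper: the same triangle-inequality split about the exact minimiser $u(\tilde\lambda^k) = \arg\min_{u\in V} L(u,\tilde\lambda^k)$, the same use of strong convexity/coercivity plus Monte Carlo consistency to bound the primal error by $C(\varepsilon^k + \eta^k)$, and the same appeal to continuity of the minimiser map $\lambda \mapsto u(\lambda)$ to handle the dual term. Two remarks are worth recording. First, your explicit treatment of the gap between $\inf_{\mathcal{V}_N} L$ and $\inf_V L$ via density (Lemma~\ref{lem:NN_approx}) is a step the paper's proof uses implicitly but does not spell out. Second, the ``hard part'' you single out is exactly where the paper's own argument is thinnest: the proof simply asserts $\tilde\lambda^k \to \lambda^*$ and concludes $u^k \to u^*$, without the quasi-Fej\'er/Robbins--Siegmund bookkeeping you outline. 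Your observation that the perturbation entering the multiplier recursion is controlled only by $\varepsilon^k + \eta^k$ with $\eta^k \le C/\sqrt{M_k}$, so that a self-contained dual argument requires either $\sum_k M_k^{-1/2} < \infty$ or importing $\tilde\lambda^k \to \lambda^\dagger$ as a standing hypothesis (as Lemma~\ref{lem:stabilityMCUz} does), is a genuine and correct refinement rather than a defect of your proof; the theorem as stated does not supply either, and the paper's proof does not close this step either. Either of your two proposed remedies would make the argument rigorous.
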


\begin{proof}
Define $u^k := \arg\min_{u \in V} L(u, \tilde{\lambda}^k)$. By the
triangle inequality,
\begin{equation}
\Norm{ u^k_\theta - u^* }_V \leq \Norm{ u^k_\theta - u^k }_V + \Norm{ u^k - u^* }_V.
\end{equation}
Coercivity of $L(\cdot, \tilde{\lambda}^k)$ implies that for some $\alpha > 0$,
\begin{equation}
L(u, \tilde{\lambda}^k) \geq L(u^k, \tilde{\lambda}^k) + \frac{\alpha}{2} \Norm{ u - u^k }_V^2 \qquad \text{for all } u \in V.
\end{equation}
Taking $u = u^k_\theta$ and rearranging yields
\begin{equation}
\Norm{ u^k_\theta - u^k }_V^2 \leq \frac{2}{\alpha} \qp{ L(u^k_\theta, \tilde{\lambda}^k) - L(u^k, \tilde{\lambda}^k) }.
\end{equation}

The difference on the right-hand side is controlled by the Monte Carlo
error and optimisation error. Since the integrands in $L$ are
Lipschitz and bounded, standard Monte Carlo estimates give
\begin{equation}
\mathbb{E}_{\xi_k} \left| L_h(u, \tilde{\lambda}^k; \xi_k) - L(u, \tilde{\lambda}^k) \right| \leq \eta^k \leq \frac{C}{\sqrt{M_k}}.
\end{equation}
Hence,
\begin{equation}
\mathbb{E}_{\xi_k} \Norm{ u^k_\theta - u^k }_V^2 \leq \frac{2}{\alpha} \qp{ \varepsilon^k + \eta^k }.
\end{equation}

Since $\sum_k \varepsilon^k < \infty$ and $\eta^k \to 0$, the first
term in the triangle inequality tends to zero in expectation.

It remains to show that $\Norm{ u^k - u^* }_V \to 0$. This follows
from the convergence of the exact Uzawa scheme and the continuity of
the minimiser map $\lambda \mapsto \arg\min_u L(u, \lambda)$ under
coercivity and smooth dependence on $\lambda$. Since
$\tilde{\lambda}^k \to \lambda^*$ and $u^* = \arg\min_u L(u,
\lambda^*)$, we conclude $u^k \to u^*$ in $V$.

Combining both terms, we obtain
\begin{equation}
\mathbb{E}_{\xi_k} \qp{ \Norm{ u^k_\theta - u^* }_V^2 } \to 0. \qedhere
\end{equation}
\end{proof}

\begin{corollary}[Convergence of boundary residual in expectation]
\label{cor:boundary}
Under the assumptions of Theorem~\ref{the:convergence}, the boundary
residual vanishes in expectation:
\begin{equation}
\mathbb{E} \qp{ \Norm{ u^k_\theta - g }^2_{\leb2(\Gamma^-)} } \to 0 \qquad \text{as } k \to \infty.
\end{equation}
\end{corollary}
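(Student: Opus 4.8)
The plan is to exploit the fact that the exact solution $u^*$ satisfies the inflow condition $u^* = g$ on $\Gamma^-$ by construction, so that the boundary residual of the iterates is nothing but the boundary component of the $V$-error already controlled by Theorem~\ref{the:convergence}. Concretely, since $u^* = g$ on $\Gamma^-$, one has the pointwise identity $u^k_\theta - g = u^k_\theta - u^*$ on $\Gamma^-$, and hence
\begin{equation}
  \Norm{ u^k_\theta - g }_{\leb2(\Gamma^-)} = \Norm{ u^k_\theta - u^* }_{\leb2(\Gamma^-)}.
\end{equation}
The whole corollary then reduces to observing that the inflow trace norm is dominated by the graph norm.

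First I would record that domination directly from the definition of the $V$-norm, namely
\begin{equation}
  \Norm{ u }_V^2 = \Norm{ (\mathcal{T} + \mathcal{S}) u }_{\leb2(\mathcal{W})}^2 + \Norm{ u }_{\leb2(\Gamma^-)}^2 \geq \Norm{ u }_{\leb2(\Gamma^-)}^2,
\end{equation}
so that $\Norm{ u^k_\theta - u^* }_{\leb2(\Gamma^-)}^2 \leq \Norm{ u^k_\theta - u^* }_V^2$. Combining this with the identity above and taking expectations over the quadrature samples gives
\begin{equation}
  \mathbb{E} \qp{ \Norm{ u^k_\theta - g }_{\leb2(\Gamma^-)}^2 } = \mathbb{E} \qp{ \Norm{ u^k_\theta - u^* }_{\leb2(\Gamma^-)}^2 } \leq \mathbb{E} \qp{ \Norm{ u^k_\theta - u^* }_V^2 }.
\end{equation}
Invoking Theorem~\ref{the:convergence}, the right-hand side tends to zero as $k \to \infty$, which yields the claim. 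No new estimates on quadrature or optimisation error are required, since those are already absorbed into the $V$-norm convergence.

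The only point that requires care — and the step I would justify most carefully — is that the saddle point $u^*$ satisfies the boundary constraint exactly rather than merely approximately. This is precisely the role of the inner maximisation over $\lambda \in \leb2(\Gamma^-)$ in the saddle-point problem \eqref{eq:minimisers}: any nonzero residual $u^* - g$ in $\leb2(\Gamma^-)$ would drive the supremum over $\lambda$ to $+\infty$, forcing $u^* = g$ on $\Gamma^-$ at any finite saddle point. Once this exact constraint satisfaction is in hand, the corollary is an immediate consequence of Theorem~\ref{the:convergence}, requiring nothing beyond the triangle-inequality-free identity on the trace and the elementary norm domination.
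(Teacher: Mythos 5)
Your proposal is correct and follows essentially the same route as the paper: identify $u^k_\theta - g$ with $u^k_\theta - u^*$ on $\Gamma^-$ using $u^* = g$ there, bound the inflow trace norm by the $V$-norm, and invoke Theorem~\ref{the:convergence}. The only (harmless) cosmetic difference is that you observe the trace norm is literally a summand of the $V$-norm, whereas the paper phrases this as continuity of the trace map $V \to \leb2(\Gamma^-)$; your added justification that the saddle-point structure forces $u^* = g$ exactly is a correct, if strictly optional, supplement.
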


\begin{proof}
By Theorem~\ref{the:convergence}, we have $\mathbb{E} \qp{ \Norm{
    u^k_\theta - u^* }_V^2 } \to 0$. Since the trace map $V \to
\leb2(\Gamma^-)$ is continuous, it follows that
\begin{equation}
\mathbb{E} \qp{ \Norm{ u^k_\theta - u^* }^2_{\leb2(\Gamma^-)} } \to 0.
\end{equation}
Using $u^* = g$ on $\Gamma^-$, we obtain
\begin{equation}
\mathbb{E} \qp{ \Norm{ u^k_\theta - g }^2_{\leb2(\Gamma^-)} }
= \mathbb{E} \qp{ \Norm{ u^k_\theta - u^* }^2_{\leb2(\Gamma^-)} },
\end{equation}
which tends to zero.
\end{proof}

\begin{remark}[Interpretation of convergence results]
Theorem~\ref{the:convergence} and Corollary~\ref{cor:boundary} show
that, under idealised assumptions on network expressivity, optimiser
accuracy and quadrature consistency, the neural iterates $u^k_\theta$
converge in expectation to $u^*$ in the $V$-norm and satisfy the
inflow condition in $\leb2(\Gamma^-)$ asymptotically.

These results hold in expectation, not for individual realisations. In
particular, they do not imply that $u^k_\theta = g$ pointwise at
finite $k$, but rather that boundary residuals vanish on average as $k
\to \infty$. This contrasts with penalty-based PINNs, where boundary
errors typically persist even in the limit.
\end{remark}

\section{Numerical Experiments}
\label{sec:numerics}

We now present a numerical experiment illustrating the performance of
the Deep Uzawa algorithm for a simple transport problem. The spatial
domain is $D = [0,1]^2 \subset \mathbb{R}^2$ and the angular domain is
the unit circle $S = \mathbb{S}^1$. The inflow boundary is defined as
\begin{equation}
\Gamma^- := \ensemble{ (x, \vec{\omega}) \in \partial D \times S }{ \vec{\omega} \cdot \vec{n}(x) < 0 },
\end{equation}
where $\vec{n}(x)$ denotes the outward unit normal to $\partial D$. We
parameterise $\vec{\omega}$ as $\vec{\omega} = (\cos \theta, \sin
\theta)$ with $\theta \in [0, 2\pi]$.

\subsection*{Example 1: Directional Transport with Isotropic Absorption}

We solve the steady linear transport equation
\begin{equation}
  \vec{\omega} \cdot \nabla u(\vec{x}, \vec{\omega}) + \sigma(\vec{x}, \vec{\omega}) u(\vec{x}, \vec{\omega}) = f(\vec{x}, \vec{\omega}), \qquad (\vec{x}, \vec{\omega}) \in D \times S,
\end{equation}
subject to the inflow boundary condition $u = g$ on $\Gamma^-$. In
this example, we take $\sigma(\vec{x}, \vec{\omega}) \equiv 1$, $g
\equiv 0$, and define the source term $f$ by
\begin{equation}
  f(\vec{x}, \vec{\omega}) = \begin{cases}
    1, & \text{if } \Norm{ \vec{x} - (0.5, 0.5) } \leq r_0, \\
    0, & \text{otherwise},
  \end{cases}
\end{equation}
with $r_0 = 1$.

 We train a single neural network $u_\theta$ to approximate the
 solution simultaneously across all directions $\vec{\omega}$. In
 post-processing, we evaluate $u_\theta(\vec{x}, \vec{\omega})$ for
 fixed values of $\theta \in [0, 2\pi]$, where $\vec{\omega} = (\cos
 \theta, \sin \theta)$, in order to visualise angular slices of the
 solution. We also compute the scalar flux
\begin{equation}
  \phi(\vec{x}) := \int_{\mathbb{S}^1} u_\theta(\vec{x}, \vec{\omega})\, \d\vec{\omega},
\end{equation}
which gives a directionally averaged measure of particle density. The
results, shown in Figure~\ref{fig:example1:summary}, confirm that the
method captures the directionality of the transport process, correctly
propagates the influence of the source along characteristics, and
produces physically meaningful scalar flux profiles.

The plot of $u_\theta$ on the inflow boundary verifies that the
network satisfies the condition $u = g = 0$ approximately, while the
training loss decays steadily, in agreement with the convergence
theory of Section~\ref{sec:convergence}.

\begin{figure}[h!]
  \centering
  \begin{subfigure}[t]{0.48\textwidth}
    \centering
    \includegraphics[width=\linewidth]{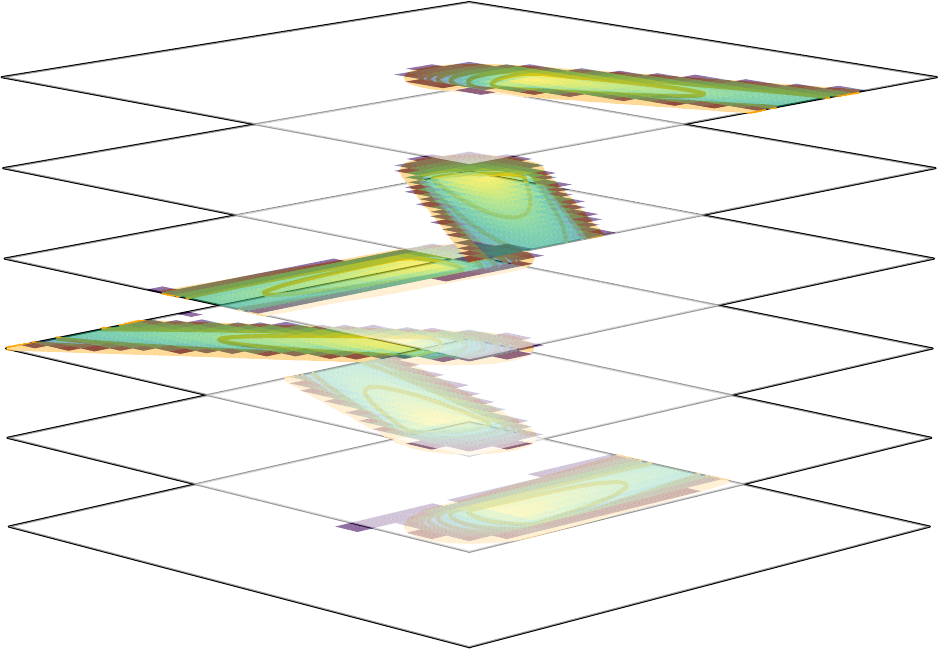}
    \caption{Directional slices of $u_\theta(\vec{x}, \vec{\omega})$ for selected values of $\vec{\omega}$.}
    \label{fig:example1:landscape}
  \end{subfigure}
  \hfill
  \begin{subfigure}[t]{0.48\textwidth}
    \centering
    \includegraphics[width=\linewidth]{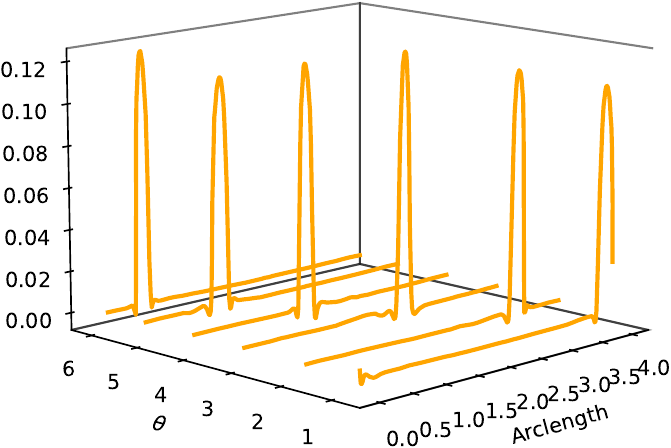}
    \caption{Solution $u_\theta$ on the inflow boundary $\Gamma^-$.}
    \label{fig:example1:boundary}
  \end{subfigure}

  \medskip
  \begin{subfigure}[t]{0.48\textwidth}
    \centering
    \includegraphics[width=\linewidth]{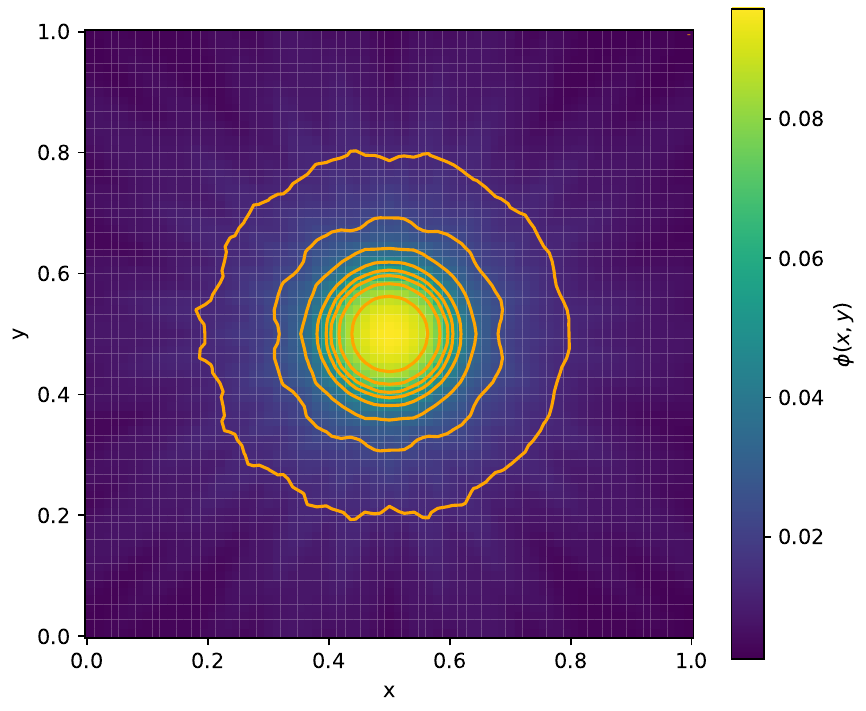}
    \caption{Scalar flux $\phi(\vec{x})$ with contour overlay.}
    \label{fig:example1:scalar}
  \end{subfigure}
  \hfill
  \begin{subfigure}[t]{0.48\textwidth}
    \centering
    \includegraphics[width=\linewidth]{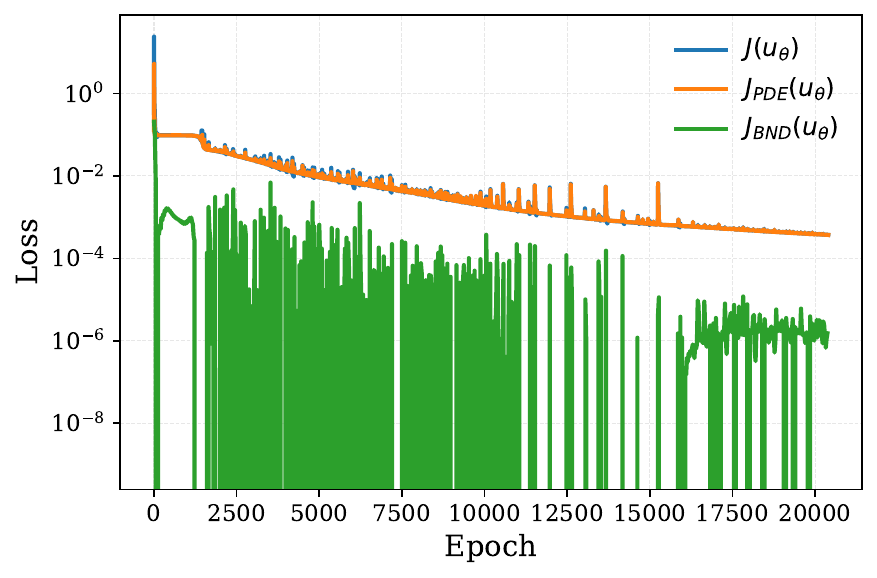}
    \caption{Training loss history.}
    \label{fig:example1:loss}
  \end{subfigure}

  \caption{Summary of results for Example 1. The learned solution
    exhibits directional propagation consistent with transport
    dynamics, satisfies the inflow condition approximately, and
    produces a physically meaningful scalar flux.}
  \label{fig:example1:summary}
\end{figure}

\subsection*{Example 2: Shadowing from a Local Obstacle}

We test the method’s ability to capture directional transport and
shadow formation. Let $D = [0,1]^2$ and $S = \mathbb{S}^1$. We set
$\sigma(\vec{x}) = 1$ in most of the domain, but define a
high-absorption obstacle at the centre:
\begin{equation}
  \sigma(\vec{x}) = 
  \begin{cases}
    50, & \text{if } \Norm{\vec{x} - (0.5, 0.5)} \leq 0.15, \\
    1, & \text{otherwise}.
  \end{cases}
\end{equation}
We take $f \equiv 0$, and prescribe boundary data to model a leftward-pointing beam:
\begin{equation}
  g(\vec{x}, \vec{\omega}) =
  \begin{cases}
    1, & \vec{x}_1 = 0 \text{ and } \vec{\omega} = (1, 0), \\
    0, & \text{otherwise}.
  \end{cases}
\end{equation}
This setup generates a sharp directional front which should cast a
visible shadow behind the obstacle.

Figure~\ref{fig:example3:summary} shows the learned solution. The
scalar flux confirms that the beam is partially blocked and attenuated
in the shadow region, while the inflow boundary condition is
approximately satisfied. The loss plot shows steady optimisation, with
both the PDE and boundary components contributing to the residual.

\begin{figure}[h!]
  \centering
  \begin{subfigure}[t]{0.48\textwidth}
    \centering
    \includegraphics[width=\linewidth]{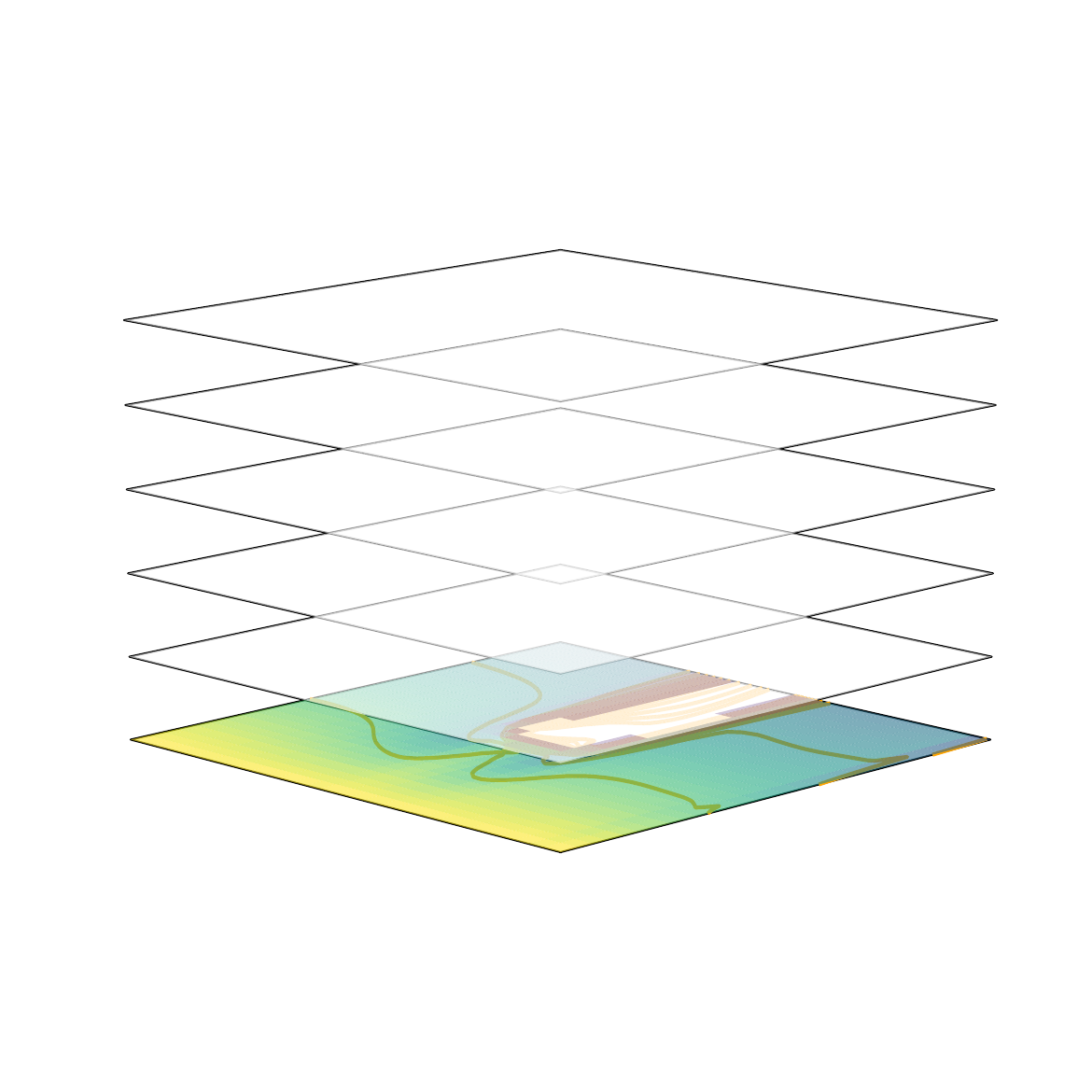}
    \caption{Directional slices of $u_\theta(\vec{x}, \vec{\omega})$ for selected $\vec{\omega}$.}
    \label{fig:example3:landscape}
  \end{subfigure}
  \hfill
  \begin{subfigure}[t]{0.48\textwidth}
    \centering
    \includegraphics[width=\linewidth]{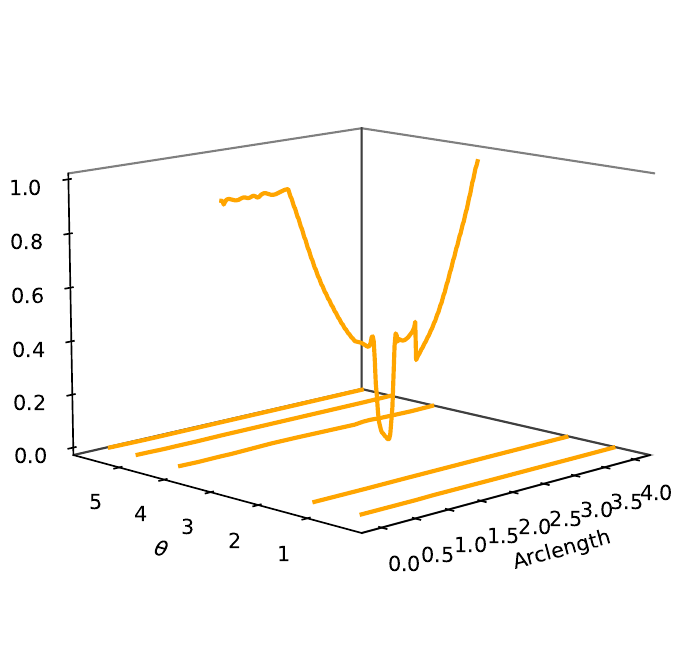}
    \caption{Solution $u_\theta$ on the inflow boundary $\Gamma^-$.}
    \label{fig:example3:boundary}
  \end{subfigure}
  
  \medskip

  \begin{subfigure}[t]{0.48\textwidth}
    \centering
    \includegraphics[width=\linewidth]{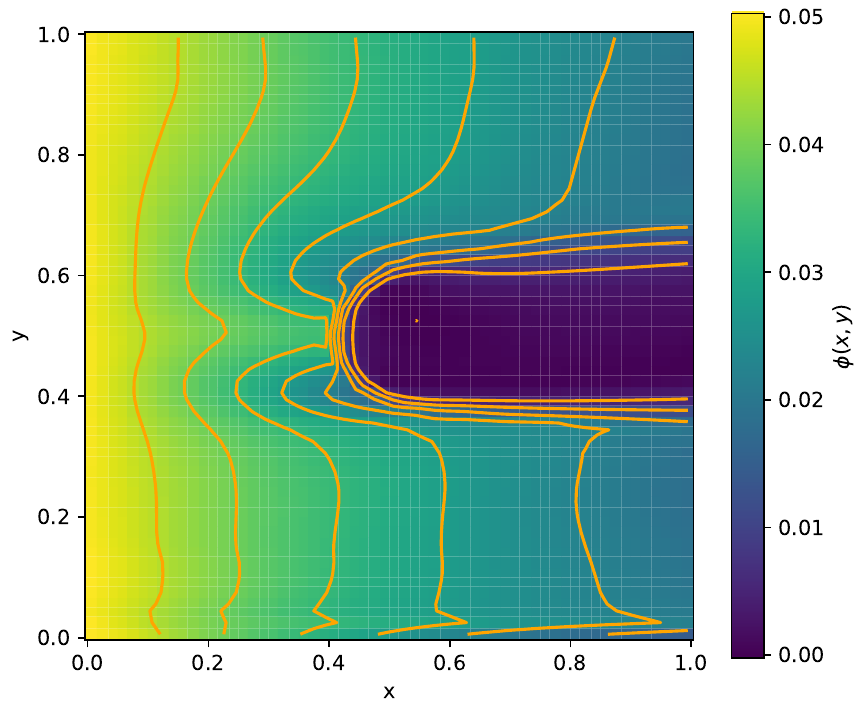}
    \caption{Scalar flux $\phi(\vec{x})$ showing shadow behind the obstacle.}
    \label{fig:example3:scalar}
  \end{subfigure}
  \hfill
  \begin{subfigure}[t]{0.48\textwidth}
    \centering
    \includegraphics[width=\linewidth]{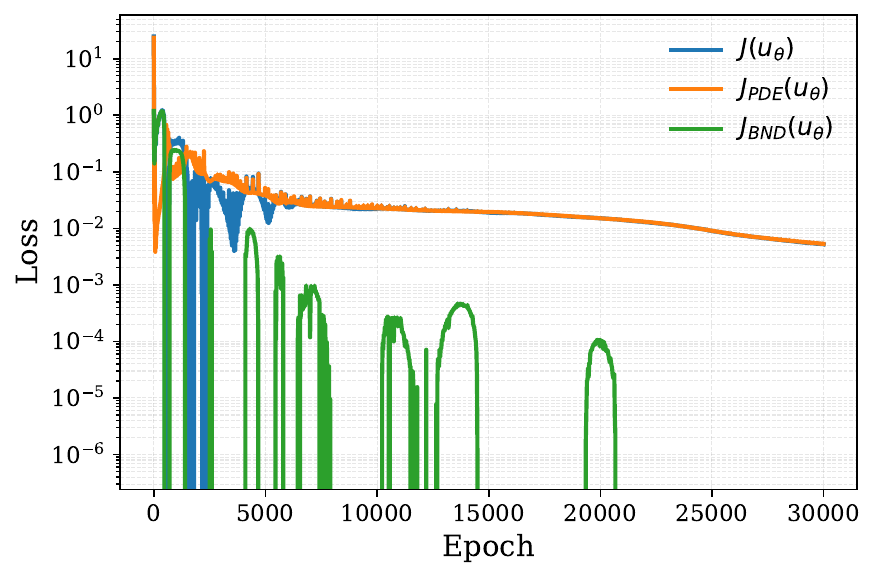}
    \caption{Training loss history.}
    \label{fig:example3:loss}
  \end{subfigure}

  \caption{Summary of results for Example 2. The method accurately resolves beam propagation and shadowing due to a high-absorption obstacle.}
  \label{fig:example3:summary}
\end{figure}

\subsection*{Example 3: Effect of Angular Scattering}

We study the impact of angular redistribution by comparing isotropic
and strongly forward-peaked scattering. Let $\sigma_A = 0.1$ and
$\sigma_S = 9.9$, so that scattering dominates. The source term is
zero, $f \equiv 0$, and the inflow condition is set uniformly as $g
\equiv 1$ on $\Gamma^-$.

We compare two scattering kernels. In the isotropic case, energy is
redistributed uniformly in all directions
\begin{equation}
  K(\vec{\omega}', \vec{\omega}) = \frac{1}{2\pi}.
\end{equation}
In the forward-peaked case, we take
\begin{equation}
  K(\vec{\omega}', \vec{\omega}) = \frac{1}{Z_\epsilon} \exp\left( \frac{\vec{\omega} \cdot \vec{\omega}'}{\epsilon} \right), \qquad \epsilon \ll 1,
\end{equation}
with $Z_\epsilon$ a normalisation constant. This models highly aligned
scattering, where particles preferentially remain close to their
incoming direction.

Figure~\ref{fig:example4:combined} shows the learned solutions in both
cases. Panels (a) and (b) display angular slices of $u_\theta(\vec{x},
\vec{\omega})$ for selected directions $\vec{\omega} = (\cos \theta,
\sin \theta)$, revealing stronger directional persistence in the
forward-peaked case. Panels (c) and (d) show the corresponding scalar
flux $\phi(\vec{x})$, computed by integrating $u_\theta$ over
$\vec{\omega}$. The flux is visibly more focused along characteristics
when the scattering is forward-aligned, as expected from the form of
the kernel.

\begin{figure}[h!]
  \centering
  \begin{subfigure}[t]{0.48\textwidth}
    \centering
    \includegraphics[width=\linewidth]{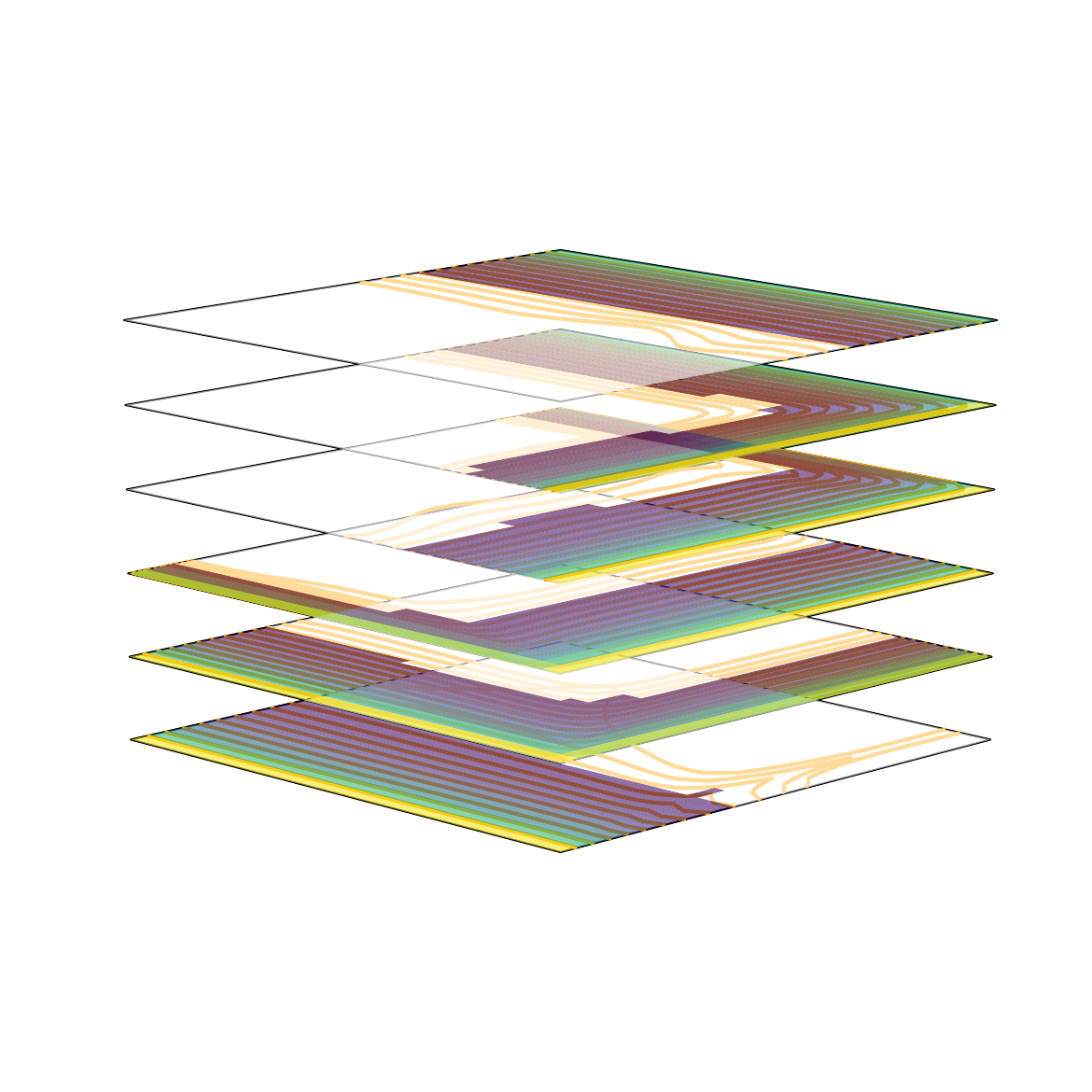}
    \caption{Angular slices for isotropic scattering.}
  \end{subfigure}
  \hfill
  \begin{subfigure}[t]{0.48\textwidth}
    \centering
    \includegraphics[width=\linewidth]{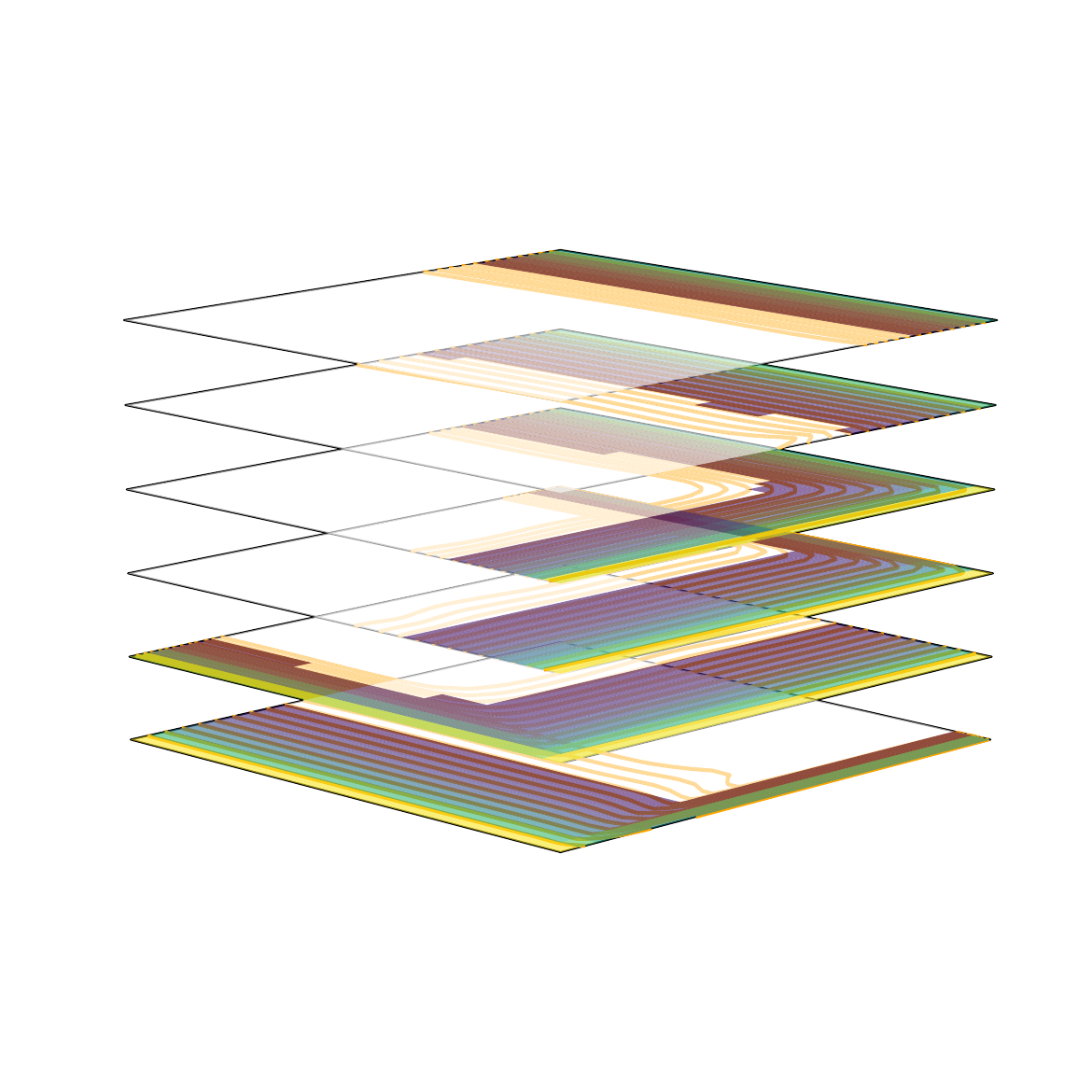}
    \caption{Angular slices for forward-peaked scattering.}
  \end{subfigure}
  \medskip
  \begin{subfigure}[t]{0.48\textwidth}
    \centering
    \includegraphics[width=\linewidth]{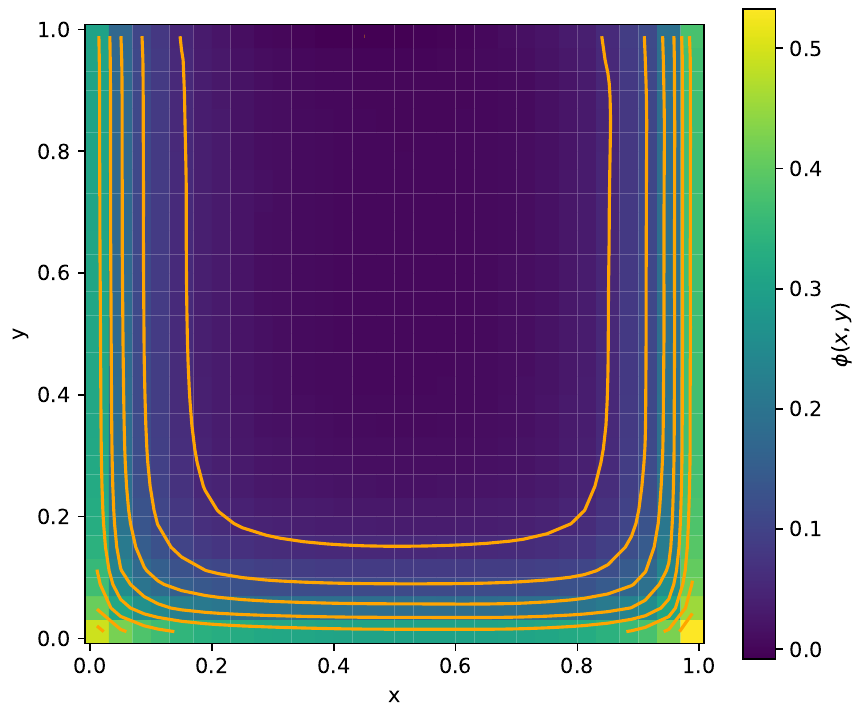}
    \caption{Scalar flux for isotropic scattering.}
  \end{subfigure}
  \hfill
  \begin{subfigure}[t]{0.48\textwidth}
    \centering
    \includegraphics[width=\linewidth]{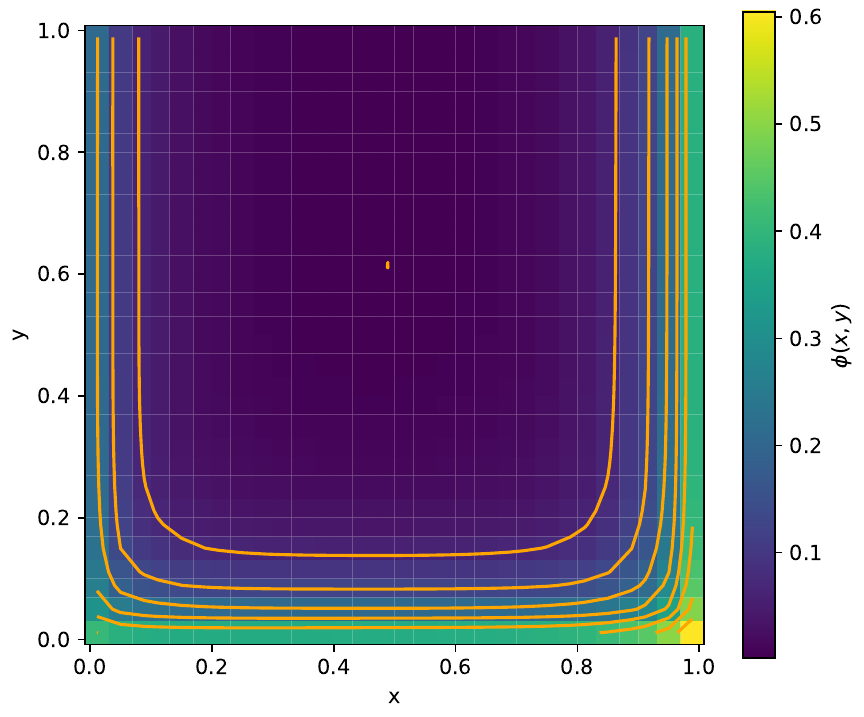}
    \caption{Scalar flux for forward-peaked scattering.}
  \end{subfigure}
  \caption{Summary of results for Example 3. Comparison of learned solutions $u_\theta$ and scalar fluxes under different scattering regimes. Forward-peaked scattering produces greater directional bias and less angular smoothing.}
  \label{fig:example4:combined}
\end{figure}

\subsection*{Example 4: Noisy and Discontinuous Boundary Data}

This test evaluates how the Deep Uzawa scheme handles boundary
conditions that are either discontinuous or contaminated by noise. We
set $f \equiv 0$, $\sigma \equiv 1$, and define the inflow data as
\begin{equation}
  g(\vec{x}, \vec{\omega}) = 
  \begin{cases}
    1 + \delta(\vec{x}, \vec{\omega}), & \text{if } x_1 = 0, \\
    0, & \text{otherwise},
  \end{cases}
\end{equation}
where $\delta(\vec{x}, \vec{\omega})$ is a sample of Gaussian noise
with zero mean and standard deviation $0.05$. This creates a sharp
interface in space combined with angular noise, and provides a
challenging test of the method’s robustness.

Figure~\ref{fig:example5:diagnostics} illustrates the learned solution
and associated quantities. Panel (a) shows angular slices of the
solution $u_\theta(\vec{x}, \vec{\omega})$, evaluated at fixed
$\vec{\omega} = (\cos \theta, \sin \theta)$, confirming that the
method preserves directional structure despite boundary
fluctuations. Panel (b) displays the inflow trace of $u_\theta$, where
the noise is present but partially smoothed by the training
process. Panel (c) shows the scalar flux, where the effect of noise is
spatially damped away from the inflow boundary. Panel (d) plots the
training loss, which exhibits stable convergence despite the
irregularity of the data. Together, these results confirm that the
Uzawa formulation enforces the boundary condition in a weak sense and
allows the network to regularise noise internally while satisfying the
constraint in expectation.

\begin{figure}[h!]
  \centering
  \begin{subfigure}[t]{0.48\textwidth}
    \centering
    \includegraphics[width=\linewidth]{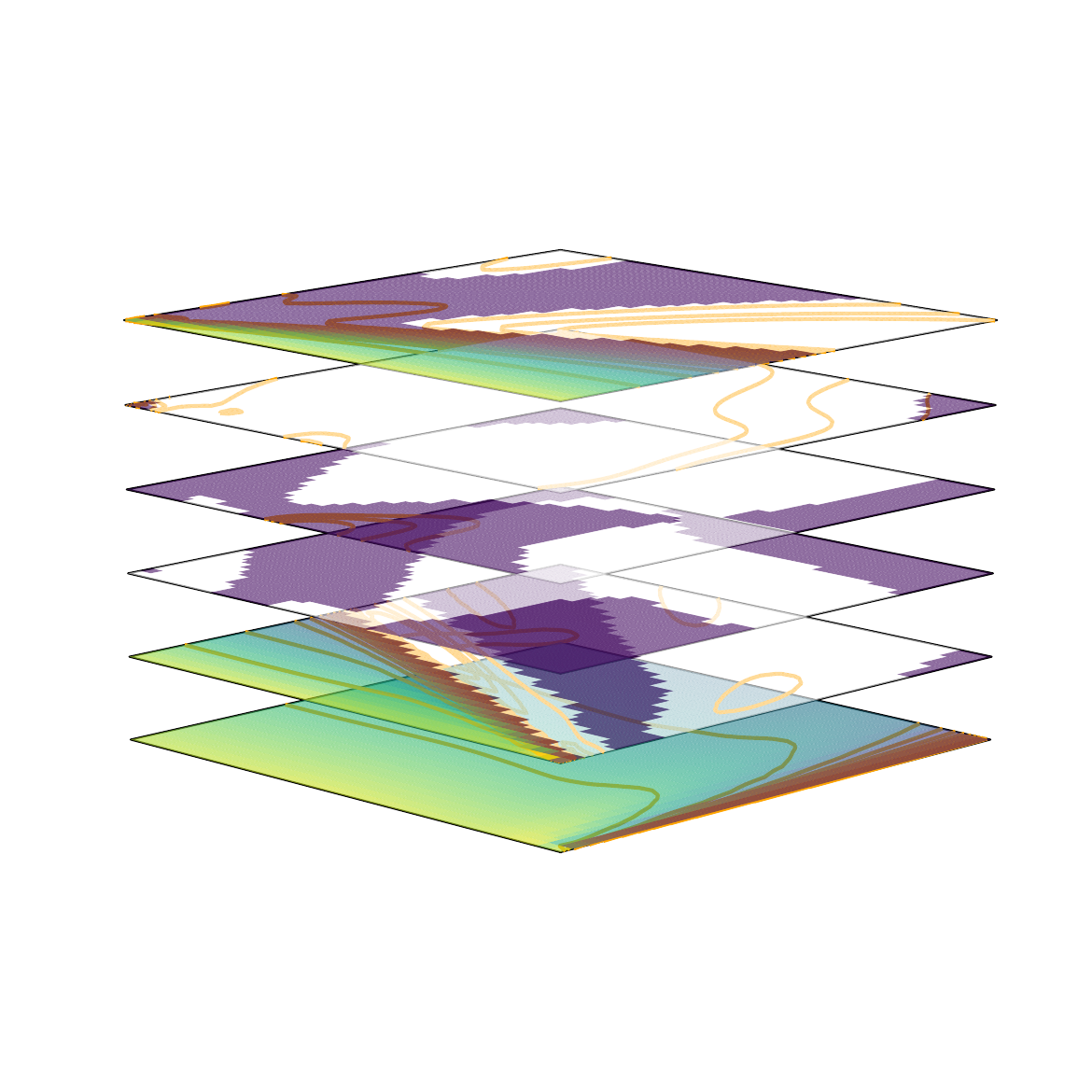}
    \caption{Angular slices of the solution $u_\theta(\vec{x}, \vec{\omega})$.}
  \end{subfigure}
  \hfill
  \begin{subfigure}[t]{0.48\textwidth}
    \centering
    \includegraphics[width=\linewidth]{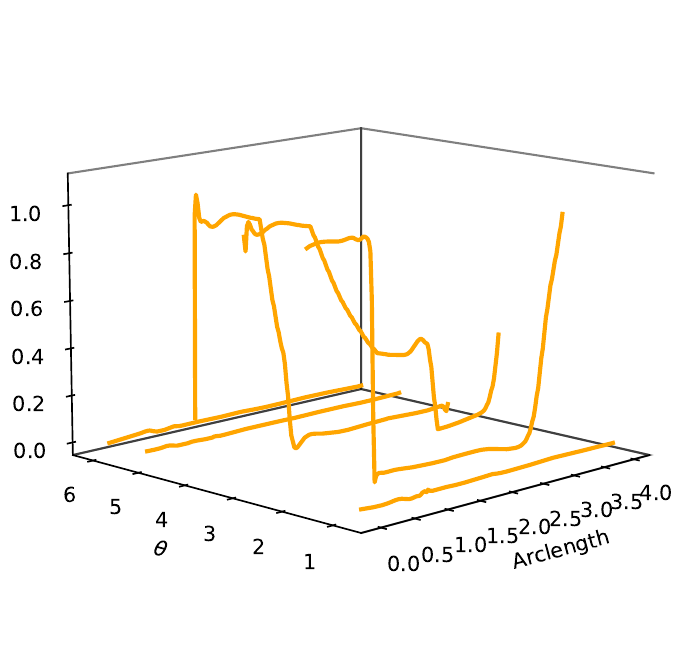}
    \caption{Trained solution on the noisy inflow boundary $\Gamma^-$.}
  \end{subfigure}
  \medskip
  \begin{subfigure}[t]{0.48\textwidth}
    \centering
    \includegraphics[width=\linewidth]{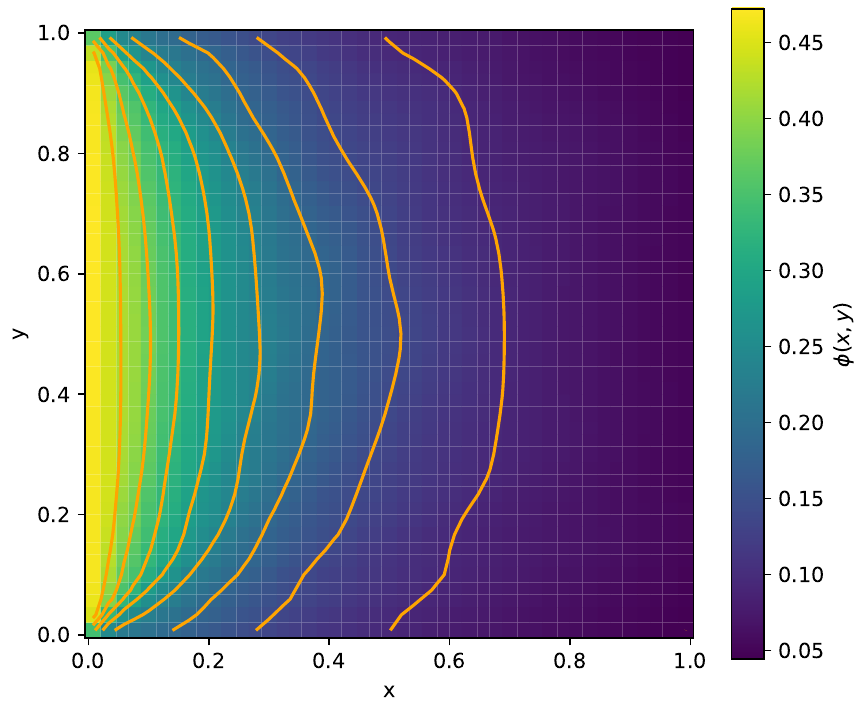}
    \caption{Scalar flux $\phi(\vec{x}) = \int_S u_\theta(\vec{x}, \vec{\omega})\, \mathrm{d}\vec{\omega}$.}
  \end{subfigure}
  \hfill
  \begin{subfigure}[t]{0.48\textwidth}
    \centering
    \includegraphics[width=\linewidth]{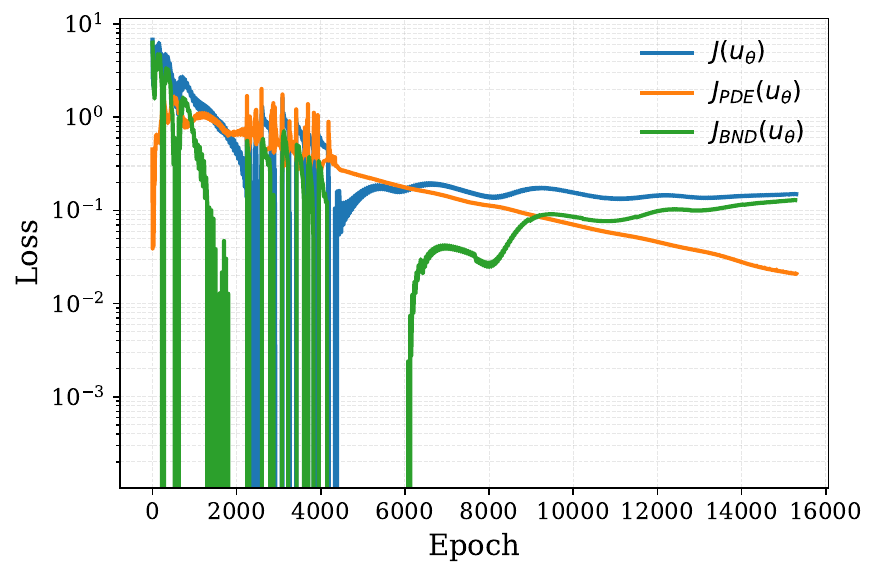}
    \caption{Training loss history.}
  \end{subfigure}
  \caption{Summary of results for Example 4. Diagnostics for noisy
    inflow data. Despite discontinuities and random fluctuations, the
    learned solution remains smooth in the interior, and the boundary
    mismatch is controlled.}
  \label{fig:example5:diagnostics}
\end{figure}

\subsection*{Example 5: Heterogeneous Medium}

To evaluate the ability of the Deep Uzawa scheme to capture spatially heterogeneous coefficients, we define a piecewise constant absorption profile:
\begin{equation}
  \sigma(\vec{x}) = 
  \begin{cases}
    0.1, & x_1 < 0.5, \\
    5, & x_1 \geq 0.5.
  \end{cases}
\end{equation}
We set $f \equiv 0$ and impose uniform inflow data $g \equiv 1$ on
$\Gamma^-$. The goal is to assess whether the network can represent
spatial transitions correctly, in particular the reduced penetration
depth induced by the high absorption on the right side of the domain.

Figure~\ref{fig:example6:diagnostics} displays the resulting network
solution and associated diagnostics. Panel (a) shows angular slices of
the learned solution $u_\theta(\vec{x}, \vec{\omega})$, evaluated for
selected directions $\vec{\omega} = (\cos \theta, \sin \theta)$, which
clearly reflect the spatial variation in absorption. Panel (b)
confirms that the uniform boundary data is captured accurately across
all directions. Panel (c) presents the scalar flux, which exhibits a
sharp decay beyond $x_1 = 0.5$, matching the jump in
absorption. Finally, panel (d) displays the training loss, which
decays steadily, supporting the robustness of the method under
discontinuous coefficients.

\begin{figure}[h!]
  \centering
  \begin{subfigure}[t]{0.48\textwidth}
    \centering
    \includegraphics[width=\linewidth]{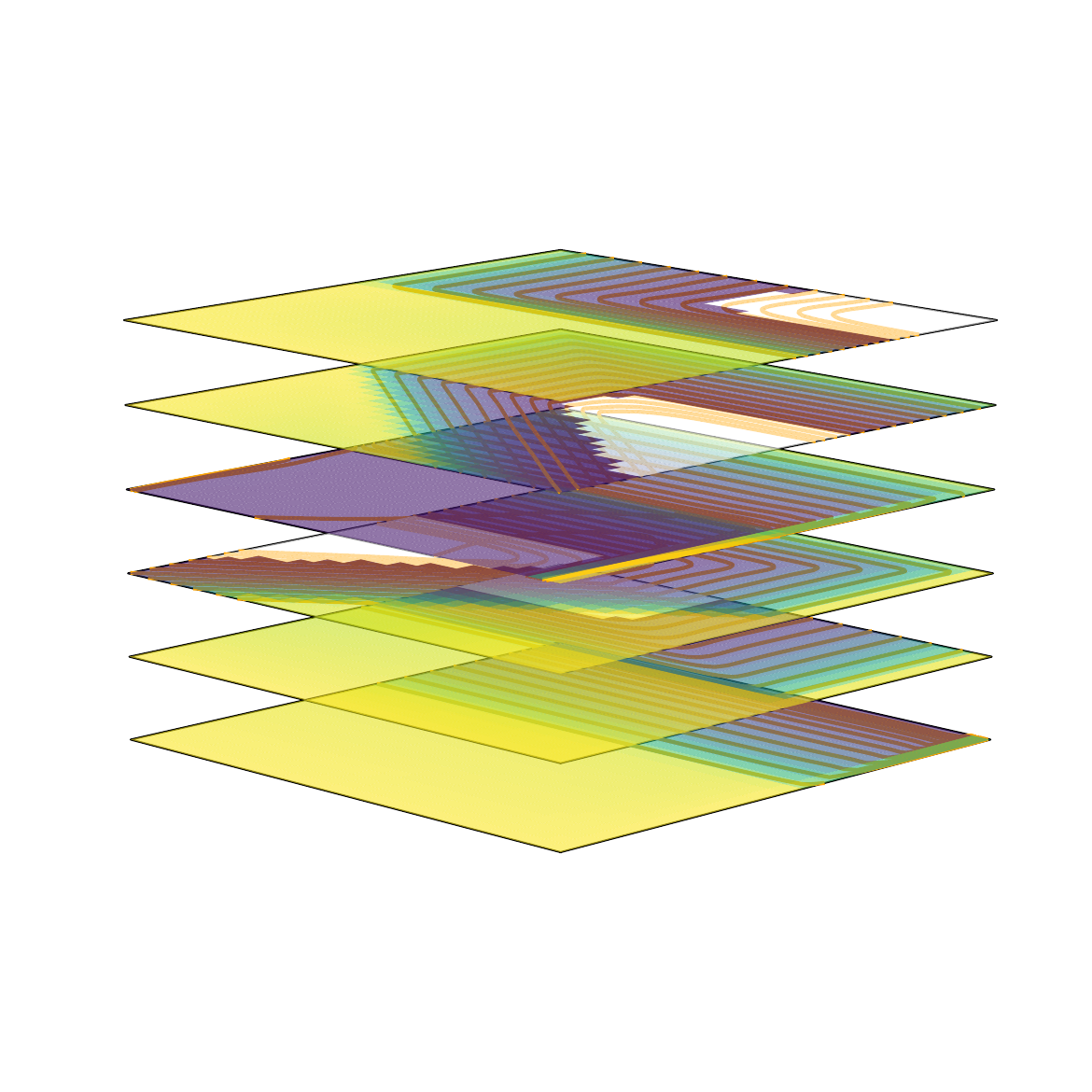}
    \caption{Angular slices of $u_\theta(\vec{x}, \vec{\omega})$.}
  \end{subfigure}
  \hfill
  \begin{subfigure}[t]{0.48\textwidth}
    \centering
    \includegraphics[width=\linewidth]{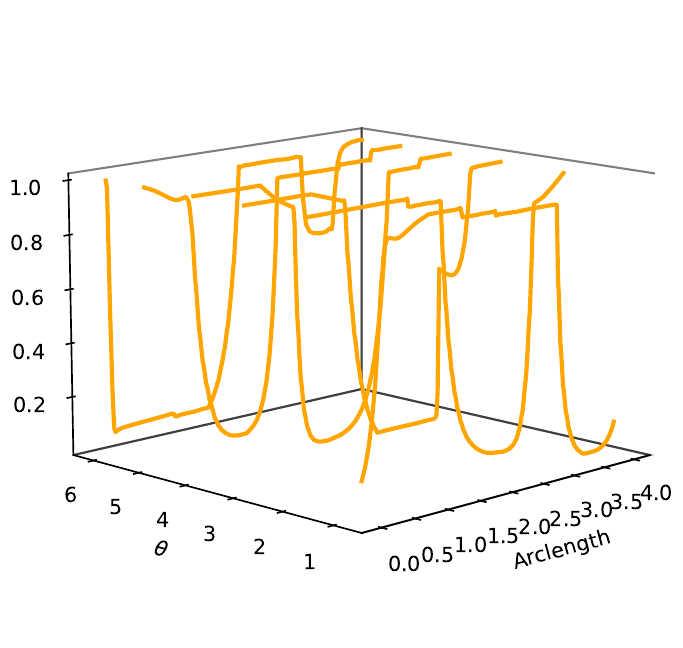}
    \caption{Trained solution on the inflow boundary $\Gamma^-$.}
  \end{subfigure}
  \medskip
  \begin{subfigure}[t]{0.48\textwidth}
    \centering
    \includegraphics[width=\linewidth]{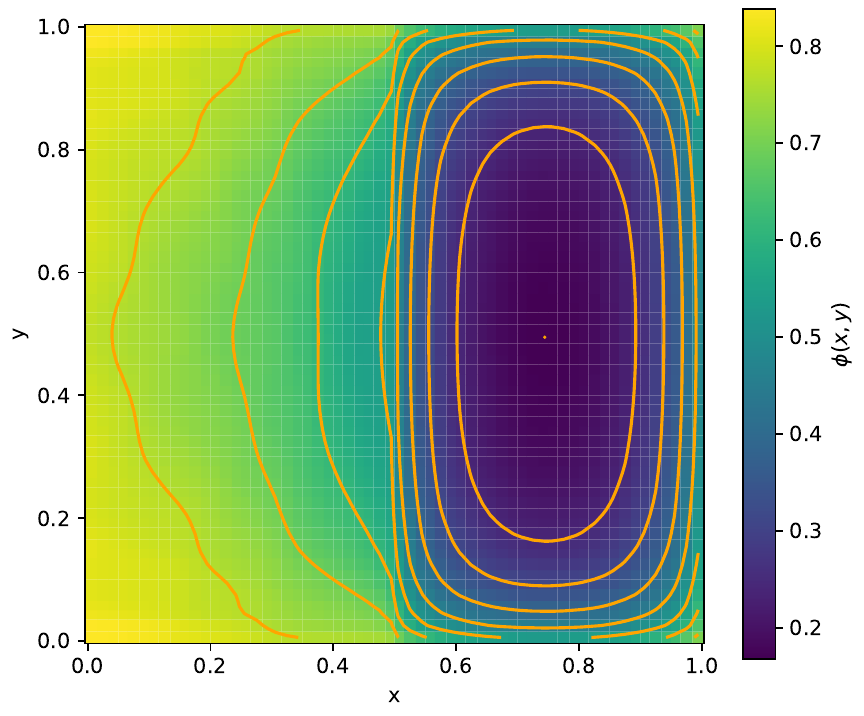}
    \caption{Scalar flux $\phi(\vec{x}) = \int_S u_\theta(\vec{x}, \vec{\omega})\, \mathrm{d}\vec{\omega}$.}
  \end{subfigure}
  \hfill
  \begin{subfigure}[t]{0.48\textwidth}
    \centering
    \includegraphics[width=\linewidth]{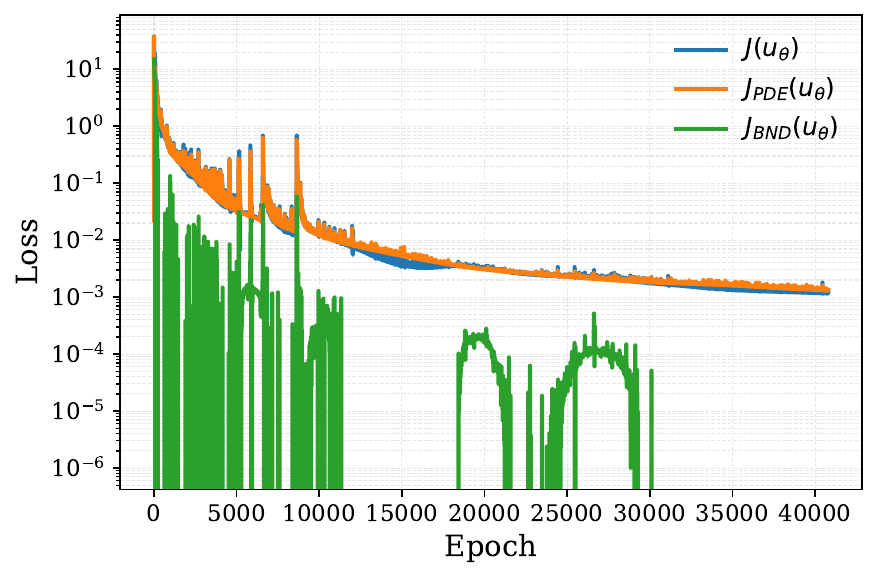}
    \caption{Training loss history.}
  \end{subfigure}
  \caption{Summary of results for Example 5. Diagnostics for a
    heterogeneous medium. The solution exhibits appropriate
    attenuation across the absorption interface without oscillations,
    and the training process remains stable.}
  \label{fig:example6:diagnostics}
\end{figure}

\section{Conclusion}
\label{sec:conc}

We have presented a neural network framework for stationary linear
transport equations with inflow boundary conditions. The method
employs a variational formulation with a Lagrange multiplier on the
inflow boundary, implemented via a Deep Uzawa iteration.  We
established convergence in expectation under mild assumptions,
accounting separately for approximation, optimisation and quadrature
errors. Numerical experiments confirm that the method captures
anisotropic transport, scattering, sharp interfaces and noisy boundary
data. The scheme is mesh-free, stable under parameter variation and
readily extensible to high-dimensional problems.

Future work includes time-dependent extensions, adaptive sampling
strategies and applications to inverse transport and data
assimilation.

\section*{Acknowledgments}

AP and TP were supported by the EPSRC programme grant EP/W026899/1. TP
also received support from the Leverhulme Trust grant RPG-2021-238 and
TP the EPSRC grant EP/X030067/1. The authors also acknowledge support
by the Hellenic Foundation for Research and Innovation (H.F.R.I.)
under the \emph{2nd Call for H.F.R.I.~Research Projects to support
  Post-Doctoral Researchers} (project number: $01247$). All this
support is gratefully acknowledged.

\printbibliography

\end{document}